\def \cal{\mathcal}
\newtheorem{thm}{Theorem}[section]
\newtheorem{lem}[thm]{Lemma}
\newtheorem{pro}[thm]{Proposition}
\newcommand{\wid}{\widetilde}
\numberwithin{equation}{section}
\begin{document}

\title{\bf Favorite sites of one-dimensional asymmetric simple random walk}
 \author{Guangshuo Zhou$^1$, Zechun Hu$^{1,}$\footnote{Corresponding author}\ , and  Renming Song$^{2}$\\ \\
  {\small $^1$ College of Mathematics, Sichuan  University,
 Chengdu 610065, China}\\
 {\small zhgsh3363@163.com, zchu@scu.edu.cn}\\ \\
  {\small $^2$Department of Mathematics,
University of Illinois Urbana-Champaign, Urbana, IL 61801, USA}\\
 {\small rsong@illinois.edu}}

 \date{}
\maketitle

 \noindent{\bf Abstract:}\quad
 In this paper, we study  favorite sites of one-dimensional asymmetric simple random walks. We show that almost surely,  for any fixed integer $r\geq 1$,  ``$r$ favorite sites" occurs infinitely often.
 We also give the asymptotic growth rate of the number of favorite sites.

\smallskip

\noindent {\bf Keywords: } Favorite site, local time,  transient random walk.

\smallskip

\noindent {\bf MSC (2020):}\quad 60F15, 60J55.

\section{Introduction and main results}\setcounter{equation}{0}

Let $X_i,i=1,2,\ldots,$ be independent and identically distributed random variables with
$$
\mathbf{P}(X_1 =1)=p~~ \text{and}~~ \mathbf{P}(X_1=-1)=1-p=:q.
$$
For any $x\in  \mathbb{Z}$, define $S_0=x$, $S_n = x+X_1 + X_2 +\cdots+X_n$ for $n\ge 1$. Then
$\{S_n\}_{n\geq 0}$ is a one-dimensional simple random walk starting from $x$. We will use $\mathbf{P}_x$ to denote the law of this random walk and $\mathbf{E}_x$ to denote the corresponding expectation. We will write $\mathbf{P}_0$ and $\mathbf{E}_0$ as $\mathbf{P}$ and $\mathbf{E}$, respectively. In this paper, we are concerned with
one-dimensional asymmetric simple random walks and so we will assume $p\neq q$. Without loss of generality, we will assume $p>q$.

For any $z\in \mathbb{Z}, n\in \mathbb{N}, n\geq 1$, define
\begin{equation*}\label{eq:1.1}
  \xi(z,n) := \# \{k: 0<k \leq n,~S_k =z \},
\end{equation*}
where $\# D$ denotes the cardinality of the set $D$. $\xi(z, n)$ is called the local time of the random walk at $z$ up to time $n$. Define $\xi(n)=\max_{y\in \mathbb{Z}}\xi(y,n)$. A site $z\in \mathbb{Z}$ is called a favorite site at time $n$ if $\xi(z, n)=\xi(n)$.  The set
$$
 \mathcal{K}(n):= \left\{  z\in\mathbb{Z}: \xi(z,n)=\xi(n)\right\}
$$
is the set of all the favorite sites of the random walk at time $n$.
In this paper we study the local time and favorite sites of the one-dimensional
asymmetric random walk.

The study of the local time of transient random walks was initiated by the papers Dvoretzky-Erd\H{o}s \cite{DE1951} and Erd\H{o}s-Taylor \cite{ET1960}. These two papers examined properties of the symmetric simple random walk in dimensions $d\geq 3$.
Cs\'{a}ki et al. \cite{CFR2008} proved several  properties of the local time of the one-dimensional asymmetric simple random walk and showed that its behavior is similar to that of the simple symmetric random walks in
dimensions $d\geq 3$.

The study of the favorite site set $\mathcal{K}(n)$ started with the papers Bass-Griffin \cite{BG1985} and Erd\H{o}s-R\'{e}v\'{e}sz \cite{ER1984} for symmetric simple random walks. In \cite{ER1984} and \cite{ER1987}, Erd\H{o}s and R\'{e}v\'{e}sz posed the question whether $\#\mathcal{K}(n)=r$ occurs infinitely often (i.o. for short) for $r\geq 3$ for $d$-dimensional symmetric random walks. For $d\ge 3$, Erd\H{o}s-R\'{e}v\'{e}sz  \cite{ER1991} showed that, almost surely,  $\#\mathcal{K}(n)=r$ i.o. for any integer $r \geq 1$. For one-dimensional symmetric random walks, the related question of favorite edges was first studied. T\'{o}th-Werner \cite{TW1997} proved that almost surely there are only finitely many times at which there are four distinct favorite edges.  Then T\'{o}th \cite{T2001} studied the favorite site problem for one-dimensional symmetric random walks and proved that eventually  $\# \mathcal{K}(n) \leq 3$ almost surely. However, the question whether three favorite sites  occur infinitely often almost surely was open for nearly 20 years until Ding-Shen \cite{DS2018} settled this affirmatively. Similar phenomena for favorite edges were obtained by Hao et al. \cite{HHMS2024}. Recently, Hao et al. \cite{HLOZ2024} proved that $\#\mathcal{K}(n) =3$  i.o. for  two-dimensional symmetric  simple random walks  and  derived  sharp asymptotics for  $\#\mathcal{K}(n)$ for $d$-dimensional symmetric  simple random walks, $d\ge 3$.

The main goal of this paper is to prove the following results for one-dimensional asymmetric simple random walks.

\begin{thm}\label{thm:1.1}
  For any given integer $r\geq 1$,  ``$r$ favorite sites" occurs infinitely often almost surely.
\end{thm}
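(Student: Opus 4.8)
The plan is to first reduce Theorem~\ref{thm:1.1} to a statement about the local time profile $(\xi(z,\infty))_z$ alone, and then to analyse that profile via the regeneration structure of the transient walk. \emph{Reduction.} Since $p>q$, the walk is transient, so $\ell_z:=\xi(z,\infty)<\infty$ for every $z$ while $\xi(\infty):=\lim_n\xi(n)=\infty$ (this is part of \cite{CFR2008}, and also follows at once from the block decomposition below). I would note that $n\mapsto\xi(n)$ is nondecreasing and can only increase by jumps of size $1$, because one step of the walk changes a single local time by $1$. Thus if $\xi(n)=\xi(n-1)+1$ then $\xi(S_n,n)=\xi(n)$ and every other site has strictly smaller local time at time $n$, so $\mathcal K(n)=\{S_n\}$ is a singleton; while if $\xi(n)=\xi(n-1)$ then $\mathcal K(n-1)\subseteq\mathcal K(n)\subseteq\mathcal K(n-1)\cup\{S_n\}$. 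Hence, with $a_m:=\min\{n:\xi(n)=m\}$, on the epoch $\{a_m,\dots,a_{m+1}-1\}$ the integer $\#\mathcal K(n)$ starts at $1$ and increases by unit steps up to its final value $c_m:=\#\{z:\xi(z,a_{m+1}-1)=m\}$, so it assumes every value in $\{1,\dots,c_m\}$. Since the epochs are disjoint, all nonempty, and infinite in number, ``$\#\mathcal K(n)=r$ i.o.'' is \emph{equivalent} to ``$c_m\ge r$ for infinitely many $m$'', and it suffices to prove the latter.

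\emph{Regeneration and a lower bound for $c_m$.} Because $S_n\to+\infty$, the walk has regeneration times $\theta_1<\theta_2<\cdots$ with levels $\rho_i:=S_{\theta_i}\uparrow\infty$, and the blocks of the trajectory between consecutive $\theta_i$'s are i.i.d.\ (after a transient initial block). I would use two properties: (i) for $\rho_i\le z<\rho_{i+1}$, the whole local time $\ell_z$ is accumulated during block $i$, so $(\ell_z)_{\rho_i\le z<\rho_{i+1}}$ is a function of block $i$ only; (ii) if $z$ is in block $i$ and $z'$ in block $j>i$, then every visit to $z$ precedes every visit to $z'$. Setting $\mathcal M_i:=\max_{\rho_i\le z<\rho_{i+1}}\ell_z$ (an i.i.d.\ sequence), let $R_m$ be the number of blocks $i$ with $\mathcal M_i=m$ and $\max_{j\le i}\mathcal M_j=m$ — the blocks that realise the running record while it equals $m$. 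For such an $i$, pick a site $z_i$ in block $i$ with $\ell_{z_i}=m$; by (i)--(ii) the $m$-th visit to $z_i$ occurs strictly before $\theta_{i+1}$. On the other hand, the first site ever to reach local time $m+1$ lies in the first block $i_0$ with $\mathcal M_{i_0}\ge m+1$, so $a_{m+1}\ge\theta_{i_0}\ge\theta_{i+1}$ for every $i$ counted by $R_m$. Hence each such $z_i$ (and they are distinct, lying in disjoint blocks) is counted in $c_m$, giving $c_m\ge R_m$. It remains to prove $R_m\ge r$ infinitely often.

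\emph{The record argument.} Let $T_m:=\min\{i:\mathcal M_i\ge m\}$, which is finite a.s.\ since $\xi(\infty)=\infty$, and let $\mathcal G_i=\sigma(\text{blocks }1,\dots,i)$; as $T_1\le T_2\le\cdots$, the $\sigma$-fields $\mathcal G_{T_m}$ increase with $m$. On the event $\{\mathcal M_{T_m}=m\}\in\mathcal G_{T_m}$ the running record first equals $m$ at block $T_m$, and by the strong Markov property the blocks after $T_m$ are i.i.d.\ and independent of $\mathcal G_{T_m}$; hence on this event $\mathbf P(R_m\ge r\mid\mathcal G_{T_m})=\mathbf P(Y_m\ge r-1)=p_m^{\,r-1}$, where $Y_m$ counts the post-$T_m$ blocks that attain level $m$ before one exceeds $m$, and $p_m:=\mathbf P(\mathcal M_1=m)/\mathbf P(\mathcal M_1\ge m)$. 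Here I need the geometric-type bound $\mathbf P(\mathcal M_1>m)=O(\mathbf P(\mathcal M_1=m))$, so that $p_m\ge c_r>0$ for all large $m$; this follows from the classical fact that the crossing numbers of a transient nearest-neighbour walk form a branching process with geometric offspring law (so that inside a block each $\ell_z$ is dominated by a geometric variable of parameter $p-q$), or can be quoted from the local time asymptotics in \cite{CFR2008}. Setting $A_m:=\{\mathcal M_{T_m}=m\}\cap\{R_m\ge r\}$, which is $\mathcal G_{T_{m+1}}$-measurable, I then obtain $\sum_m\mathbf P(A_m\mid\mathcal G_{T_m})\ge c_r\sum_m\mathbf 1_{\{\mathcal M_{T_m}=m\}}$, and the last sum is the number of record values of $(\mathcal M_i)_i$, which is infinite because the running record tends to $\infty$. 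The conditional Borel--Cantelli lemma (with $A_m\in\mathcal G_{T_{m+1}}$ and conditioning on the preceding $\sigma$-field $\mathcal G_{T_m}$) then gives $A_m$ i.o., hence $R_m\ge r$ i.o., and together with the previous two paragraphs this proves the theorem.

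The main obstacle will be the two genuinely quantitative inputs: constructing the regeneration decomposition carefully enough that the localisation property (i) and the visit-ordering property (ii) hold verbatim, and establishing the tail asymptotics of the block maximum $\mathcal M_1$ precisely enough to guarantee $p_m\not\to0$. The combinatorial reduction in the first paragraph and the conditional Borel--Cantelli step should be routine by comparison.
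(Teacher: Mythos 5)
Your plan is a genuinely different route from the paper's. The paper gets Theorem \ref{thm:1.1} very cheaply: the event that the walk stays strictly above $S_{T_m^{j-1}}$ on each interval $(T_m^{j-1},T_m^j]$, $j=1,\dots,k$, already forces $T_m^k<T_{m+1}^1$, so the strong Markov property and Lemma \ref{lem:2.1} give $\mathbf{P}(C_m^k)\ge(1-2q)^k$ uniformly in $m$ (see \eqref{e:step1}); Lemma \ref{lem:3.2} then gives a positive lower bound for the i.o.\ probability, and a Kolmogorov $0$--$1$ law (the tail-event argument from \cite{DS2018}, using transience of the favorite-site process) upgrades it to $1$. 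Your first paragraph (reduction to $c_m\ge r$ i.o.) is correct and is essentially the paper's reduction to ``$C_m^k$ i.o.\ in $m$'', and your conditional Borel--Cantelli endgame mirrors what the paper does in the lower bound of Theorem \ref{thm:1.2}. The substance of your proposal is the regeneration-block/record-value mechanism, which replaces both the uniform bound \eqref{e:step1} and the $0$--$1$ law, but it hinges on inputs you have not established.

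The genuine gap is the tail-ratio estimate for the block maximum: you need $p_m=\mathbf{P}(\mathcal M_1=m)/\mathbf{P}(\mathcal M_1\ge m)$ bounded away from $0$ for \emph{all} large $m$ (not merely along a subsequence), because the record levels at which you apply the bound are random; otherwise $\sum_m\mathbf{P}(A_m\mid\mathcal G_{T_m})=\infty$ is not guaranteed. The justifications you offer do not deliver this. Marginal geometric domination of each $\ell_z$ (as in Lemma \ref{lem:2.3}) controls single sites, but $\mathcal M_1$ is a maximum over a block of random width, and the width is dependent on the local times inside the block; crude bounds (union bound or Cauchy--Schwarz over the width) only give estimates of the form $\mathbf{P}(\mathcal M_1\ge m)\le Cm(2q)^m$, and such polynomial prefactors are exactly what could let the ratio $\mathbf{P}(\mathcal M_1\ge m+1)/\mathbf{P}(\mathcal M_1\ge m)$ creep up to $1$ along a subsequence. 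Nothing of this block-level form is in \cite{CFR2008}, which concerns the maximal local time of the whole walk, so it cannot simply be quoted; it would have to be proved (it is plausibly true, e.g.\ via the Markov chain of edge-crossing numbers, but that is a real lemma, not a routine remark). Two further points: the i.i.d.\ structure of the regeneration blocks needs a careful Sznitman--Zerner-type construction (standard, so acceptable), and sites visited before the first regeneration time lie outside all blocks, so your inequality $c_m\ge R_m$ can fail for the finitely many levels $m$ below the maximal local time accumulated on that initial piece --- harmless, but it should be said. As it stands, the proposal is an interesting alternative architecture with a missing key estimate, whereas the paper's argument avoids any quantitative block analysis altogether.
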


\begin{thm}\label{thm:1.2}
  Almost surely,
  \begin{equation}\label{thm1.2-a}
    \limsup_{n \to \infty} \frac{ \# \mathcal{K}(n) }{ \log \log n} = -\frac{1}{ \log ( 1-2q)}.
  \end{equation}
\end{thm}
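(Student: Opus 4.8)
The plan is to reduce \eqref{thm1.2-a} to a statement about the total local-time profile of the walk, and then to a Borel--Cantelli analysis of a family of geometric-type random variables. Throughout put $\rho := 2q$, the probability that the walk started at $0$ ever returns to $0$; since $p>q$ one has $\rho = 1-(p-q)=1-(1-2q)\in(0,1)$, so $1-2q = 1-\rho$ and the target constant is $-1/\log(1-2q) = 1/\log\frac{1}{1-\rho}$. The first ingredient is the structure of the total local times $L_z := \lim_n \xi(z,n)$, which are a.s.\ finite: for $z\ge 1$ one has $\mathbf P(L_z\ge m)=\rho^{m-1}$ for all $m\ge 1$ (the walk reaches $z$ a.s.\ because $S_n\to+\infty$, and after the first visit each further return occurs with probability $\rho$), and the sequence $(L_z)_{z\ge 1}$ is a stationary Markov chain of branching type. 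I would record the consequences actually used: $\sup_z L_z=\infty$ a.s.; the ergodic statement $k^{-1}\#\{z\le k: L_z\ge m\}\to\rho^{m-1}$; the ballisticity $R_n:=\max_{0\le j\le n}S_j\sim(p-q)n$; and enough of the regeneration (cut-point) decomposition of the positive-drift path to know that local times attached to sites in distant regions are asymptotically independent.

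Next I would carry out the reduction $\limsup_n \#\mathcal K(n)/\log\log n = \limsup_m A_m/\log m$, where, with $\tau_m := \min\{z\ge 1: L_z\ge m\}$ (non-decreasing in $m$), we set $A_m := \#\{z<\tau_{m+1}: L_z=m\}$. Two points must be handled. First, for all large $n$ the favorite sites are among the positive sites already ``frozen'' (visited for the last time by time $n$): the finitely many sites $\le 0$ drop out since $\xi(n)\to\infty$, and a currently-active site would have to attain the current record $\xi(n)\asymp \log n$, an event excluded by a probability bound summable over dyadic blocks (over a block only polynomially many sites are active, each reaching the required level only with probability of order $1/n$). Second, along the times at which the running maximum $\xi(\cdot)$ first attains a value $m$ one has $\log\log n\sim\log m$: from the ergodic theorem $\tau_m$ is of order $\rho^{-m}$, so $\log\tau_m\sim m\log(1/\rho)$ and hence $\log\log n\sim\log m$; meanwhile $\#\mathcal K(n)$ increases monotonically from $1$ to $A_m$ during the ``reign'' of level $m$ (each site with $L_z=m$ joins $\mathcal K$ when it is frozen, while $\xi(\cdot)$ stays equal to $m$), and $A_m\ge 1$ exactly when level $m$ is realized. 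This gives the displayed identity.

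The core estimate is the law of $A_m$. In the i.i.d.\ approximation one checks directly that $\mathbf P(A_m\ge t)=(1-\rho)^t$ for every $t\ge 1$ and every $m$: once the running maximum first equals $m$, each subsequently examined site, conditionally on having local time $\ge m$ (given a visit, probability $\rho^{m-1}$), equals $m$ with probability $1-\rho$ and is $\ge m+1$ with probability $\rho$, so the number of sites at level $m$ before the reign ends is geometric. I would promote this to the Markovian setting by showing the dependence perturbs the exponential rate only by $o(1)$ as $m\to\infty$ (the chain decorrelates on an $O(1)$ spatial scale, whereas the reign of level $m$ spans of order $\rho^{-m}\to\infty$ sites), obtaining $c_1(1-\rho)^t\le\mathbf P(A_m\ge t)\le c_2(1-\rho)^t$. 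For the upper bound in \eqref{thm1.2-a}, $\sum_m \mathbf P\!\big(A_m\ge(1+\varepsilon)\tfrac{\log m}{\log(1/(1-\rho))}\big)\le c_2\sum_m m^{-(1+\varepsilon)}<\infty$, so Borel--Cantelli yields $\limsup_m A_m/\log m\le 1/\log\frac1{1-\rho}$. For the lower bound I would group the levels into blocks, use the regeneration structure to make the $A_m$ within a block asymptotically independent, pick thresholds $t_j\sim\frac{\log m}{\log(1/(1-\rho))}$ so that $\sum_j\mathbf P(\max_{m\in I_j}A_m\ge t_j)=\infty$, and apply the second Borel--Cantelli lemma.

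I expect the main obstacle to be precisely the two places where the genuine (non-i.i.d.) dependence of the local-time sequence enters: pinning down $\mathbf P(A_m\ge t)$ up to constants with the correct exponential rate $(1-\rho)^t$, and establishing enough independence among the $A_m$ over a block of levels to run the converse Borel--Cantelli step. The regeneration/cut-point decomposition of the positive-drift walk is the tool for both, but turning ``distinct reigns live on distinct regeneration blocks'' into quantitative near-independence --- together with the bookkeeping ensuring that currently-active sites never become favorites in the limit --- is where the real work lies.
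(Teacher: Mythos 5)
Your route is genuinely different from the paper's (which works with the stopping times $T_m^k$, the events $A_m^k,B_m^k,C_m^k$, a conditional second Borel--Cantelli lemma, and the ``no close thick pairs'' event $E_n^\epsilon$), but as written it has a real gap at its very first step: the claimed identity $\limsup_n \#\mathcal{K}(n)/\log\log n=\limsup_m A_m/\log m$ is not established, and the quantities are not the same. The number of favorite sites during the reign of level $m$ is the \emph{temporal} count of sites that attain local time $m$ before any site attains $m+1$ (the paper's $\mathcal{G}_m=\sup\{k:T_m^k<T_{m+1}^1\}$), whereas your $A_m$ is a purely spatial functional of the terminal local-time field: a site $z<\tau_{m+1}$ with $\xi(z,\infty)=m$ need never be a favorite site (some other site, e.g.\ one far to the right already being visited, can reach $m+1$ current visits before $z$ completes its $m$-th visit), and conversely favorite sites at level $m$ may have total local time $>m$ or lie to the right of $\tau_{m+1}$. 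Your supporting claim that favorite sites are eventually among ``frozen'' sites is false at record times: whenever $\xi(\cdot)$ sets a new record the new favorite is the current position $S_n$, which is revisited afterwards with probability $2q$, and this happens infinitely often. Likewise ``$\#\mathcal{K}(n)$ increases monotonically from $1$ to $A_m$ during the reign of level $m$'' is incorrect; it increases to $\mathcal{G}_m$, not $A_m$. Since both quantities are (heuristically) geometric with the same rate $1-2q$, the final constant would come out right, but equating their $\limsup$'s is essentially the content of the theorem and cannot be assumed; bridging temporal records and the local-time profile is exactly what the paper's Lemmas 4.1 and 4.2 (via $D_n^\epsilon$, $E_n^\epsilon$ and the identification $1_{C_m^k}=1_{B_m^k}$ for large $m$) are for.

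The second gap is the one you flag yourself: the two-sided bound $\mathbf{P}(A_m\ge t)\asymp(1-2q)^t$ with constants uniform in $m$, and the near-independence of the $A_m$ across a block of levels needed for the converse Borel--Cantelli step, are only asserted. The sequence $(\xi(z,\infty))_{z\ge1}$ is a genuinely dependent Markov-type chain, and ``decorrelation on an $O(1)$ scale'' does not by itself control events defined by the exact geometric tail at level $m$ and by the random spatial window $[1,\tau_{m+1})$, whose length is itself determined by the same field. The paper sidesteps both issues by conditioning in time: the strong Markov property at the $T_m^k$ gives exact bounds $\gamma\pm$error for each new favorite (estimates \eqref{eq:4.15}--\eqref{eq:4.18}), and the generalized second Borel--Cantelli lemma applied to $\mathbf{P}(B_m^{J_m}\mid\mathcal{F}_m^1)\ge Cm^{-(1-\epsilon)}$ replaces your block-independence argument. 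To make your approach rigorous you would need to supply quantitative substitutes for both of these, e.g.\ via a cut-point/regeneration decomposition, which is nontrivial additional work rather than a routine perturbation.
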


We end this section with some
notations
that will be used later in this paper:
\begin{align}\label{gamma-h-lambda}
\gamma:=1-2q, \quad h:=\frac{q}{p}, \quad \lambda:=-\frac1{\log(2q)}
\end{align}
and
\begin{align}\label{theta-delta}
\theta:=-\frac1{\log\gamma}, \quad
\delta:=\frac{2 \log \frac{2q+h^{1/2}}{1+h^{1/2}}}{\log(2q)} -1.
\end{align}
It is elementary to check that $q<\frac12$ is equivalent to $\delta>0$.

For any real numbers $a$ and $b$, set $a\wedge b:= {\rm min}\{a,b\}$ and $a \vee b :={\rm max}\{a,b\}$. For any set $A$,  we use $1_A$ to denote the characteristic function of $A$.
 For any $r \in \mathbb{R}$, $[r]$ stands for the largest integer less than or equal to $r$.
 For integers $0\leq i <j$, set $S_{(i,j)}= (S_{i+1}, \ldots, S_{j-1})$ and $S_{[i,j]}= (S_i, \ldots, S_j)$. We define $S_{[i,j)}$ similarly.

 The rest of the paper is organized as follows. In Section 2, we collect some
preliminary results. The proofs of Theorems \ref{thm:1.1} and  \ref{thm:1.2} are given in Sections 3.

\section{Preliminaries}\setcounter{equation}{0}

We first recall some well-known facts about one-dimensional asymmetric simple random walks. The following assertion about the probability of no return can be found on \cite[p. 274]{Feller1968}, the assertion on the probability $\gamma(n)$ of no returns to the starting point in the $n-1$ steps is
given in \cite[(4.1)]{CFR2008}, \eqref{eq:2.1} is given in \cite[(4.8)]{CFR2008}.

\begin{lem}\label{lem:2.1}
It holds that
  \begin{equation*}
    \mathbf{P}(S_i \neq 0, i=1,2,...) = \gamma
  \end{equation*}
and that
\begin{equation*}
  1=\gamma(1) \geq \gamma(2) \geq \cdots \geq \gamma(n) \geq
\cdots \geq \gamma.
\end{equation*}
Furthermore, it also holds that
\begin{equation}\label{eq:2.1}
  \gamma(n) -\gamma  = O\left( \frac{(4pq)^{n/2}}{n^{3/2}} \right) .
\end{equation}
\end{lem}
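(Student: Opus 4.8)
The plan is to handle the three assertions in turn, all through first-step analysis together with the classical description of the first return time $T:=\inf\{k\ge 1:S_k=0\}$ to the origin.

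For the first assertion I would condition on $X_1$. Since $p>q$ the walk has positive drift and is transient to $+\infty$; started from $-1$ it therefore reaches $0$ with probability one, while started from $+1$ it reaches $0$ with the gambler's‑ruin probability $q/p=h$. This gives $\mathbf{P}(T<\infty)=q\cdot 1+p\cdot h=2q$, hence $\mathbf{P}(S_i\ne 0,\ i\ge 1)=1-2q=\gamma$. For the second assertion I would simply note that the events $\{S_i\ne 0,\ i=1,\dots,n-1\}$ decrease in $n$, so $\gamma(n)$ is non‑increasing with $\gamma(1)=1$, and that $\gamma(n)\downarrow\mathbf{P}(T=\infty)=\gamma$ by continuity from above; in particular $\gamma(n)\ge\gamma$. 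The key observation linking this to \eqref{eq:2.1} is the identity $\gamma(n)-\gamma=\mathbf{P}(n\le T<\infty)=\sum_{k\ge n}\mathbf{P}(T=k)$.

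For \eqref{eq:2.1} itself I would use the exact first‑return law. A first‑return path of length $2m$ is, up to the sign of its first step, a positive excursion, and there are $C_{m-1}=\frac1m\binom{2m-2}{m-1}$ of these, each of probability $(pq)^m$; hence $\mathbf{P}(T=2m)=2C_{m-1}(pq)^m=\frac1{2m-1}\binom{2m}{m}(pq)^m$. Applying the uniform bound $\binom{2m}{m}=O(4^m m^{-1/2})$ gives $\mathbf{P}(T=2m)=O\big((4pq)^m m^{-3/2}\big)$, and since $p\ne q$ forces $4pq=4p(1-p)<1$, summing this over $m\ge\lceil n/2\rceil$ and factoring the smallest exponent out of the resulting geometric series yields $\gamma(n)-\gamma=O\big((4pq)^{n/2}n^{-3/2}\big)$, as claimed. (Alternatively one could read off the same tail asymptotics from the generating function $\mathbf{E}[s^T;\,T<\infty]=1-\sqrt{1-4pqs^2}$, but the direct combinatorial estimate is cleaner.)

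The first two assertions are routine. The only delicate point is in \eqref{eq:2.1}: one must make sure the tail summation does not destroy the polynomial factor $n^{-3/2}$. This is precisely where the asymmetry enters through $4pq<1$: the geometric series $\sum_{m\ge M}(4pq)^m$ contributes only the bounded constant $(1-4pq)^{-1}$, while $m^{-3/2}\le (n/2)^{-3/2}$ over the whole range of summation, so the first term of the series sets the order.
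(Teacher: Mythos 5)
Your proof is correct. Note that the paper does not actually prove this lemma: it imports the no-return probability from Feller (p.~274) and the monotonicity statement together with the tail estimate \eqref{eq:2.1} from Cs\'aki--F\"oldes--R\'ev\'esz \cite{CFR2008} ((4.1) and (4.8) there), so your argument should be compared with those classical sources rather than with anything in the text. Your route is a self-contained elementary derivation: first-step analysis plus gambler's ruin gives $\mathbf{P}(T<\infty)=q\cdot 1+p\cdot(q/p)=2q$; monotone convergence of the events $\{S_i\neq 0,\ i<n\}$ gives the chain of inequalities; and the identity $\gamma(n)-\gamma=\mathbf{P}(n\le T<\infty)$ combined with the exact first-return law $\mathbf{P}(T=2m)=\frac{1}{2m-1}\binom{2m}{m}(pq)^m$, Stirling's bound $\binom{2m}{m}=O(4^m m^{-1/2})$, and the geometric tail $\sum_{m\ge \lceil n/2\rceil}(4pq)^m=O\big((4pq)^{n/2}\big)$ (valid precisely because $p\neq q$ forces $4pq<1$) yields \eqref{eq:2.1}; your observation that the polynomial factor survives because $m^{-3/2}\le (n/2)^{-3/2}$ on the whole range of summation is exactly the right point to check. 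The cited reference obtains the same estimate essentially via the generating function $1-\sqrt{1-4pqs^2}$ of the first return time, which you mention as an alternative; what your version buys is that the lemma no longer depends on an external computation, at the cost of a page of standard combinatorics.
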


The following elementary result can be found on \cite[p. 68]{Feller1968}.

\begin{lem}\label{lem:2.2}
For $z\in \mathbb{Z}$, if $n+z$ is an even number, then
\begin{equation*}
  \mathbf{P}(S_n=z) =\binom{n}{\frac{n+z}{2}} p^{\frac{n+z}{2}} q^{\frac{n-z}{2}}.
\end{equation*}
\end{lem}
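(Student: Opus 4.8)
The plan is to condition on the number of $+1$ increments among $X_1,\dots,X_n$, which turns the statement into an elementary path count. Fix $z\in\mathbb Z$ and $n\ge 1$ with $n+z$ even. For a given realization of the walk let $k=\#\{1\le j\le n:\ X_j=1\}$ denote the number of up-steps, so that $n-k$ is the number of down-steps; since $S_0=0$ we have $S_n=k-(n-k)=2k-n$. Hence the event $\{S_n=z\}$ coincides with the event $\{k=(n+z)/2\}$, and the hypothesis that $n+z$ is even is exactly what makes $(n+z)/2$ an integer (and when $|z|>n$ this event is empty, matching the convention $\binom{n}{(n+z)/2}=0$).

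Next I would compute the probability of the latter event directly. By independence and the common distribution of the $X_j$, every fixed sign pattern $(\varepsilon_1,\dots,\varepsilon_n)\in\{-1,1\}^n$ with exactly $k:=(n+z)/2$ entries equal to $1$ has probability $p^{k}q^{n-k}$; the number of such patterns is $\binom{n}{k}$, obtained by choosing the positions of the up-steps. Summing $p^{k}q^{n-k}$ over these $\binom{n}{k}$ pairwise disjoint elementary events gives
\[
\mathbf P(S_n=z)=\binom{n}{k}\,p^{k}q^{n-k}=\binom{n}{\tfrac{n+z}{2}}\,p^{\frac{n+z}{2}}\,q^{\frac{n-z}{2}},
\]
which is the asserted identity.

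There is no real obstacle here: the only points deserving a remark are the parity-and-range bookkeeping of the first paragraph and the disjointness of the elementary events summed in the second, both immediate. If one prefers a generating-function derivation, the same formula drops out of $\mathbf E\big[e^{itS_n}\big]=(pe^{it}+qe^{-it})^n$ by the binomial theorem: the coefficient of $e^{izt}$ equals $\binom{n}{(n+z)/2}p^{(n+z)/2}q^{(n-z)/2}$, and only exponents of the form $e^{i(2k-n)t}$ occur, which re-explains why $n+z$ must be even.
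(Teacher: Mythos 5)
Your argument is correct and complete: identifying $\{S_n=z\}$ with the event that exactly $(n+z)/2$ of the increments equal $+1$, and summing the probability $p^{(n+z)/2}q^{(n-z)/2}$ over the $\binom{n}{(n+z)/2}$ disjoint sign patterns, is precisely the classical derivation. The paper itself offers no proof, only a citation to Feller (p.~68), and your counting argument is the standard one found there, so nothing further is needed.
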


It follows from Lemma \ref{lem:2.2} that, for any  positive integer $m$,
\begin{align}\label{eq:2.2}
  \mathbf{P}\left(S_0 \in S_{[m / 2, \infty) }\right)&= \mathbf{P}\left(\bigcup_{n \geq \frac{m}{2}} \{ S_n =0 \} \right) \leq \sum_{n\geq \frac{m}{2}} \binom{n}{[\frac{n}{2}]} (pq)^{\frac{n}{2}}\nonumber\\
  & \leq \sum_{n\geq \frac{m}{2}} 2^n (pq)^{\frac{n}{2}}\leq \frac{(4pq)^{m/4}}{1-\sqrt{4pq}}.
\end{align}

For $z\in \mathbb{Z}$, set
$$
\xi(z,\infty):=\#\{k>0: S_k=z\}.
$$
The following result can be found on \cite[p. 1048]{D1967AMS}. Recall that $h=\frac{q}{p}$.

\begin{lem}\label{lem:2.3}
  For $z\in\mathbb{Z}$ and $k>0$, we have
  \begin{eqnarray*}
 \mathbf{P}(\xi(z,\infty)=k)=\left\{
\begin{array}{cl}
h^{-z}(2q)^{k-1} (1-2q), &\mbox{if}\ z<0, \\
~~~~(2q)^k ~~(1-2q),&\mbox{if}\ z=0, \\
~~~~(2q)^{k-1} (1-2q),&\mbox{if}\ z>0.
\end{array}
\right.
\end{eqnarray*}
\end{lem}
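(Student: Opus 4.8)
The plan is to obtain the distribution of $\xi(z,\infty)$ from three elementary ingredients: transience of the walk, the one-site hitting probabilities, and a geometric decomposition of the successive visits to a site via the strong Markov property. So $\xi(z,\infty)$ is finite a.s.\ because the return probability to any site is $<1$; concretely, since $p>q$ the strong law of large numbers gives $S_n/n\to p-q>0$, hence $S_n\to+\infty$ almost surely, so every site is visited only finitely often.

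First I would record the relevant hitting probabilities. Write $T_z:=\inf\{k\ge 0:S_k=z\}$. Because $S_n\to+\infty$ a.s., the walk passes through every level $z\ge 0$, so $\mathbf{P}(T_z<\infty)=1$ for $z\ge 0$. For $z<0$, set $\rho:=\mathbf{P}(T_{-1}<\infty)$; decomposing on the first step and using the strong Markov property together with spatial homogeneity yields $\rho=q+p\rho^2$, i.e.\ $p\rho^2-\rho+q=0$, whose two roots are $1$ and $q/p=h$ (here one uses $p+q=1$, so the discriminant is $(p-q)^2$). Transience forces $\rho<1$ (if $\rho=1$ the walk would reach $-n$ for every $n$ and hence $\liminf S_n=-\infty$), so $\rho=h$; applying the strong Markov property at the successive first passages to $-1,-2,\dots,z$ then gives $\mathbf{P}(T_z<\infty)=h^{-z}$ for every $z<0$.

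Next, by Lemma~\ref{lem:2.1} the probability that the walk started at $0$ ever returns to $0$ equals $1-\gamma=2q$, and by spatial homogeneity the same holds starting from any site. Hence, applying the strong Markov property at the successive visit times, at each visit to a fixed site the walk returns to that site with probability $2q$ independently of the past, so the number of returns of a walk to its own starting point equals $m$ with probability $(2q)^m(1-2q)$, $m\ge 0$. To finish I would condition on $\{T_z<\infty\}$: for $z\ne 0$ we have $T_z\ge 1$, and on this event the strong Markov property at $T_z$ shows that the visit at time $T_z$ together with the subsequent returns make $\xi(z,\infty)$ equal $1+m$, so $\mathbf{P}(\xi(z,\infty)=k\mid T_z<\infty)=(2q)^{k-1}(1-2q)$ for $k\ge 1$; multiplying by $\mathbf{P}(T_z<\infty)$, which is $1$ for $z>0$ and $h^{-z}$ for $z<0$, gives the two stated formulas. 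For $z=0$ one has $T_0=0$, so $\xi(0,\infty)$ is exactly the number of returns to $0$, namely $(2q)^k(1-2q)$ for $k\ge 0$.

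The computations are routine; the only points requiring a little care are selecting the correct root $\rho=h$ of the quadratic — which is exactly where transience enters — and keeping the bookkeeping of the repeated strong Markov applications clean. Alternatively, the statement is classical and may simply be cited, e.g.\ from \cite{D1967AMS}.
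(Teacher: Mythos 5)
Your argument is correct and complete: transience ($S_n\to+\infty$ a.s.\ since $p>q$) guarantees $\xi(z,\infty)<\infty$; the one-step decomposition $\rho=q+p\rho^2$ with the root $\rho=h$ selected by transience gives $\mathbf{P}(T_z<\infty)=h^{-z}$ for $z<0$ and $=1$ for $z\ge 0$; the return probability $2q$ from Lemma~\ref{lem:2.1} together with repeated applications of the strong Markov property at the successive visit times yields the geometric law of the number of returns, and the bookkeeping distinguishing $z=0$ (where the initial position is not counted) from $z\ne 0$ (where the first hit is counted) is handled correctly. The comparison with the paper is simple: the paper offers no proof at all and merely cites Dwass \cite{D1967AMS}, which is exactly the alternative you mention in your last sentence. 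So your write-up is a genuinely self-contained elementary derivation of a cited classical fact; it buys independence from the reference at the cost of a page of routine gambler's-ruin/strong-Markov computations, while the paper's choice buys brevity. Either route is acceptable; if you keep your proof, you could shorten it slightly by observing that the return probability $2q$ also follows from the same hitting probabilities ($q\cdot 1+p\cdot h=2q$), so Lemma~\ref{lem:2.1} need not be invoked separately.
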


The following result is \cite[Theorem 3.1]{CFR2008}.

\begin{lem}\label{lem:2.4}
  It holds that
 \begin{equation}\label{2.3}
    \lim_{n \to \infty} \frac{\xi(n)}{\log n} =\lambda.
  \end{equation}
\end{lem}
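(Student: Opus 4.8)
The plan is to prove separately the two matching one‑sided bounds $\limsup_{n\to\infty}\xi(n)/\log n\le\lambda$ and $\liminf_{n\to\infty}\xi(n)/\log n\ge\lambda$, almost surely. In both directions, since $n\mapsto\xi(n)$ is nondecreasing while $\log n$ varies slowly, it suffices to control $\xi(n)$ along the subsequence $n_j=2^j$ and then interpolate, which is exactly what makes the Borel--Cantelli lemma usable. Two facts are used throughout: as $p>q$ the walk is transient to $+\infty$ and hits every site $z\ge1$ almost surely; and by Lemma \ref{lem:2.3}, $\mathbf{P}(\xi(z,\infty)\ge k)=(2q)^{k-1}$ for every $z\ge1$, while $\lambda\log(2q)=-1$ by \eqref{gamma-h-lambda}.

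\emph{Upper bound.} Since $\xi(y,n)=0$ unless $y$ lies in the range $[-n,n]$ of the walk by time $n$, and $\xi(y,n)\le\xi(y,\infty)$, a union bound and Lemma \ref{lem:2.3} give
\[
\mathbf{P}\big(\xi(n)\ge k\big)\ \le\ \sum_{|y|\le n}\mathbf{P}\big(\xi(y,\infty)\ge k\big)\ \le\ C\,n\,(2q)^{k-1},
\]
the contribution of $y\le0$ being only $O\big((2q)^{k-1}\big)$ because $h<1$. Taking $k=k_n:=\lceil(\lambda+\varepsilon)\log n\rceil$ and using $\lambda\log(2q)=-1$ makes $\mathbf{P}(\xi(n_j)\ge k_{n_j})\le C\,n_j^{-\varepsilon|\log(2q)|}$ summable in $j$; Borel--Cantelli and monotone interpolation give $\limsup\xi(n)/\log n\le\lambda+\varepsilon$ a.s., and $\varepsilon\downarrow0$ finishes this half.

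\emph{Lower bound, reduction.} Fix $\varepsilon>0$ and put $m=m_j:=\lfloor(p-q)n_j/2\rfloor$. A Hoeffding bound for $S_{n_j}$ together with the gambler's‑ruin estimate $\mathbf{P}_x(\text{the walk ever reaches a level}\le m)=h^{(x-m)^{+}}$ shows that, outside an event of probability $O(\rho^{n_j})$ for some $\rho<1$, the walk never returns to a level $\le m$ after time $n_j$; hence $\xi(y,n_j)=\xi(y,\infty)$ for all $1\le y\le m$ and $\xi(n_j)\ge\max_{1\le y\le m}\xi(y,\infty)$ on that event. It therefore suffices to prove that, with $k=k_j:=\lceil(\lambda-\varepsilon)\log m_j\rceil$, some site of $[1,m_j]$ carries local time at least $k_j$ for all large $j$.

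\emph{Lower bound, main step and expected obstacle.} Neighbouring local times are strongly correlated ($\xi(y,\infty)$ and $\xi(y+1,\infty)$ differ only through edge local times), so I would run a second‑moment argument not over all of $[1,m]$ but over a sparse sublattice: fix a large integer $M$ and set $N:=\#\{1\le i\le L:\xi(iM,\infty)\ge k\}$ with $L=\lfloor m/M\rfloor$, so $\mathbf{E}[N]=L(2q)^{k-1}$ is of order $m^{\varepsilon|\log(2q)|}/M\to\infty$. The decisive estimate --- and the step I expect to be the main obstacle --- is that well‑separated local times are nearly independent: $\mathbf{P}\big(\xi(y,\infty)\ge k,\ \xi(y',\infty)\ge k\big)\le(2q)^{2k-2}$ whenever $|y-y'|\ge M$ with $M$ large enough and $k\asymp\log m$. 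I would derive this from the structure of the walk between the two sites: every visit to the smaller site occurs before the last visit to the larger one, hence lies inside the excursions of the walk away from the larger site that return to it (each of which reaches the smaller site only with conditional probability $\tfrac12 h^{|y-y'|-1}$), while $(\xi(z,\infty))_{z\ge1}$ is in fact a Markov chain that contracts, $\mathbf{E}[\xi(z+1,\infty)\mid\xi(z,\infty)]=(\xi(z,\infty)+1)/(2p)$ with $2p>1$, so a fluctuation of size $k\asymp\log m$ decays over a bounded number of sites at a cost polynomially small in $m$, which the large spacing $M$ then absorbs. Granting this, $\mathrm{Var}(N)\le\mathbf{E}[N]+o\big(\mathbf{E}[N]^2\big)$, Chebyshev gives $\mathbf{P}(N=0)\to0$ at a rate summable along $n_j=2^j$, and Borel--Cantelli with the interpolation step yields $\liminf\xi(n)/\log n\ge\lambda-\varepsilon$ a.s.; letting $\varepsilon\downarrow0$ completes the proof. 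Equivalently, one may observe directly that $(\xi(z,\infty))_{z\ge1}$ is a stationary, geometrically ergodic Markov chain with stationary law the Geometric$(1-2q)$ law on $\{1,2,\dots\}$, and invoke the classical extreme‑value theory for strongly mixing stationary sequences, which delivers both one‑sided bounds at once.
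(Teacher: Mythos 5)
Your upper bound is fine, and the reduction of the lower bound to $\max_{1\le y\le m_j}\xi(y,\infty)$ is essentially sound (modulo a small slip: with $m_j=\lfloor(p-q)n_j/2\rfloor$ and the Hoeffding event $\{S_{n_j}\ge (p-q)n_j/2\}$ there is no gap for the gambler's-ruin factor $h^{(S_{n_j}-m_j)^+}$ to exploit; take e.g. $m_j=\lfloor(p-q)n_j/4\rfloor$). The genuine gap is exactly at the step you flag as the main obstacle: the claimed estimate $\mathbf{P}(\xi(y,\infty)\ge k,\ \xi(y',\infty)\ge k)\le(2q)^{2k-2}$ for a \emph{fixed} large separation $M\le|y-y'|$ and $k\asymp\log m$ is false. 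Decomposing the successive visits to $\{y,y'\}$ ($d=|y-y'|$, $y<y'$) into alternating blocks, the exact joint law has the form $\gamma b\,a^{J-1}(a')^{I-1}\sum_{R\ge0}\binom{J-1}{R}\binom{I-1}{R}\beta^{R}$, where $a,a'=2q-O(h^{d})$ are the return probabilities, $b'=O(h^{d})$ is the probability that an excursion from $y'$ reaches $y$, and $\beta=bb'/(aa')\asymp h^{d}$. Optimizing over the number $R$ of interlacings gives a factor of order $\exp\big(c\,k\,h^{d/2}\big)$, so the joint tail is of order $(2q)^{2k}\exp(c\,k\,h^{d/2})$, not $(2q)^{2k}$; this is precisely why the rate in Proposition \ref{pro:2.5} (i.e. CFR's Lemma 5.1) is $\frac{2q+h^{z/2}}{1+h^{z/2}}$, with exponent $z/2$ rather than $z$. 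Your heuristic (each excursion from the higher site reaches the lower one with probability $\asymp h^{d}$, Markov-chain contraction) only sees the $h^{d}$ scale and misses this $h^{d/2}$ correlation. For fixed $M$ and $k\asymp\log m$ the spurious factor is $m^{\Theta(h^{M/2})}$, a positive power of $m$, so the inequality you propose to grant fails for all large $m$, and with it the clean bound $\mathrm{Var}(N)\le\mathbf{E}[N]+o(\mathbf{E}[N]^2)$ as you state it.

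The gap is repairable, but it needs the distance-dependent form of the correlation rather than a single fixed $M$: either take the separation of the sublattice growing like a constant times $\log\log m$ (so $k\,h^{\mathrm{sep}/2}\to0$ and pair probabilities are $(1+o(1))(2q)^{2k-2}$), or keep $M$ fixed but bound the pair at separation $dM$ by $C(2q)^{2k}\exp(Ck\,h^{dM/2})$ via Proposition \ref{pro:2.5} and sum over $d$; since $L\asymp m/M$ while only $O(\log\log m)$ ``near'' pairs per site carry the large factor $m^{Ch^{M/2}}$, one still gets $\mathrm{Var}(N)=o(\mathbf{E}[N]^2)$ and the Chebyshev/Borel--Cantelli conclusion. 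Your closing alternative is not a substitute for this work: that $(\xi(z,\infty))_{z\ge1}$ is itself a Markov chain is unjustified (the natural Markov/branching structure is carried by the edge-crossing counts, not the site local times), and ``classical extreme-value theory for strongly mixing sequences'' does not hand you an almost-sure $\limsup$ and $\liminf$ for the running maximum without verifying mixing-type conditions that amount to the same correlation estimate. For context, the paper itself does not prove this lemma at all: it quotes it as Theorem 3.1 of Cs\'aki--F\"oldes--R\'ev\'esz \cite{CFR2008}, so your attempt is necessarily a different (and, once the pair-correlation step is fixed, viable) route.
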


\bigskip

\begin{pro}\label{pro:2.5}
For any  $A>0$ and $n\in \mathbb{Z}^+:=\{z\in \mathbb{Z}: z>0\}$, we have
  \begin{equation*}
  \sup_{z\in \mathbb{Z}^+}
    \mathbf{P}(\xi(0,\infty)+\xi(z,\infty)> 2 \lambda A \log n) \leq C n^{-A(1+\delta)}.
  \end{equation*}
\end{pro}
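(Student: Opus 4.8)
The plan is to recast $\xi(0,\infty)+\xi(z,\infty)$ through the number of visits of the walk to the two–point set $\{0,z\}$ and then diagonalize a $2\times2$ substochastic matrix. Set $\widetilde W:=\#\{k\ge0:S_k\in\{0,z\}\}$; since $S_0=0\neq z$ we have the exact identity $\widetilde W=1+\xi(0,\infty)+\xi(z,\infty)$, and as $\widetilde W$ is integer–valued, $\mathbf P(\xi(0,\infty)+\xi(z,\infty)>2\lambda A\log n)=\mathbf P(\widetilde W>[2\lambda A\log n]+1)$. It therefore suffices to prove
\[
\sup_{z\in\mathbb Z^+}\mathbf P(\widetilde W>k)\le C\rho^{k}\quad(k\in\mathbb N),\qquad \rho:=\frac{2q+h^{1/2}}{1+h^{1/2}},
\]
with $C$ depending only on $p,q$: a direct computation from \eqref{gamma-h-lambda}–\eqref{theta-delta} gives $2\lambda\log\rho=-(1+\delta)$, whence $\rho^{2\lambda A\log n}=n^{-A(1+\delta)}$, and $\rho<1$ together with $[2\lambda A\log n]+1\ge2\lambda A\log n$ takes care of the rounding.

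For the key estimate I would use that, since $p>q$, the walk is transient to $+\infty$: from $0$ it returns to $\{0,z\}$ almost surely, while from $z$ it never returns to $\{0,z\}$ with probability exactly $p(1-h)=\gamma$ (it must first step up, then avoid $z$ forever). So the successive positions of the walk on $\{0,z\}$ form a Markov chain, killed only from $z$, whose transition probabilities are classical gambler's–ruin quantities; with $w:=h^{z}$ they are
\[
q_{0\to z}=\frac{\gamma}{1-w},\qquad q_{0\to0}=q_{z\to z}=\frac{2q-w}{1-w},\qquad q_{z\to0}=\frac{\gamma w}{1-w}.
\]
(This is the same recursive mechanism behind Lemma \ref{lem:2.3}, to whose geometric laws the limit $z\to\infty$ reduces.) Then $\widetilde W$ is the number of states this chain visits before being killed, so $\mathbf P(\widetilde W>k)$ is a linear combination of the $k$-th powers of the eigenvalues of the matrix $(q_{i\to j})$; since its two diagonal entries are equal, these eigenvalues simplify to
\[
\mu_+=\frac{h^{z/2}+2q}{1+h^{z/2}},\qquad \mu_-=\frac{2q-h^{z/2}}{1-h^{z/2}}.
\]
Matching $\mathbf P(\widetilde W>0)=\mathbf P(\widetilde W>1)=1$ (the second because killing from $0$ is impossible) pins down the coefficients, giving
\[
\mathbf P(\widetilde W>k)=\tfrac12\bigl(\mu_+^{k}+\mu_-^{k}\bigr)+\tfrac12\,h^{-z/2}\bigl(\mu_+^{k}-\mu_-^{k}\bigr).
\]

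It remains to bound this uniformly in $z\ge1$. As $s\mapsto\frac{s+2q}{1+s}$ is increasing and $s\mapsto\frac{2q-s}{1-s}$ is decreasing on $(0,1)$, and $h^{z/2}\le h^{1/2}$, one gets $\mu_+\le\rho$, with equality at $z=1$ (so $\rho$ is the exact base), and $|\mu_-|\le\nu:=\max\bigl\{2q,\ \tfrac{h^{1/2}-2q}{1-h^{1/2}}\bigr\}$, while the elementary inequalities $4pq<1$ and $p>\tfrac12$ give $\nu<\rho$. The first term above is then $\le\rho^{k}$. The one real difficulty — the step I would expect to require the most care — is the prefactor $h^{-z/2}$, which blows up as $z\to\infty$, so the two eigenvalue contributions must not be bounded separately; instead one writes $\mu_+^{k}-\mu_-^{k}=(\mu_+-\mu_-)\sum_{j=0}^{k-1}\mu_+^{j}\mu_-^{k-1-j}$ with $\mu_+-\mu_-=\frac{2\gamma h^{z/2}}{1-w}$, so that $h^{-z/2}$ cancels and
\[
\tfrac12\,h^{-z/2}\bigl(\mu_+^{k}-\mu_-^{k}\bigr)=\frac{\gamma}{1-w}\sum_{j=0}^{k-1}\mu_+^{j}\mu_-^{k-1-j}\le\frac{\gamma}{1-h}\cdot\frac{\rho^{k}}{\rho-\nu}.
\]
Adding the two contributions gives $\mathbf P(\widetilde W>k)\le\bigl(1+\tfrac{\gamma}{(1-h)(\rho-\nu)}\bigr)\rho^{k}$ uniformly in $z$, which with the reduction above completes the proof.
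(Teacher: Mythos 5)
Your proposal is correct, but it takes a genuinely different route from the paper. The paper's proof is essentially a one-liner: it quotes \cite[Lemma 5.1]{CFR2008}, which gives $\mathbf{P}(\xi(0,\infty)+\xi(z,\infty)\ge u)\le C\bigl(\tfrac{2q+h^{z/2}}{1+h^{z/2}}\bigr)^u$ for large $u$, bounds the base by $\rho=\tfrac{2q+h^{1/2}}{1+h^{1/2}}$ for $z\ge 1$, and then performs the same exponent arithmetic with $\lambda$ and $\delta$ that you do. You instead prove the tail bound from scratch: you identify $1+\xi(0,\infty)+\xi(z,\infty)$ with the number of visits to $\{0,z\}$, observe that the embedded chain on $\{0,z\}$ is substochastic with killing only from $z$ (killing probability $\gamma$), compute the gambler's-ruin transition probabilities, and diagonalize the resulting $2\times 2$ matrix. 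I checked the details: the entries $q_{0\to z}=\tfrac{\gamma}{1-w}$, $q_{z\to 0}=\tfrac{\gamma w}{1-w}$, $q_{0\to 0}=q_{z\to z}=\tfrac{2q-w}{1-w}$ are right; the eigenvalues do factor as $\mu_\pm=\tfrac{2q\pm h^{z/2}}{1\pm h^{z/2}}$; the boundary conditions at $k=0,1$ give exactly your formula for $\mathbf{P}(\widetilde W>k)$; the identity $2\lambda\log\rho=-(1+\delta)$ holds; the monotonicity in $s=h^{z/2}$ gives $\mu_+\le\rho$ and $|\mu_-|\le\nu<\rho$ (here the operative facts are $2q<1$ and $h<2q$, the latter being equivalent to $p>\tfrac12$ — your mention of $4pq<1$ is not actually needed, but harmless); and your telescoping of $\mu_+^k-\mu_-^k$ correctly cancels the $h^{-z/2}$ prefactor, which is indeed the one delicate point for uniformity in $z$. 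What your approach buys is self-containedness and a slightly stronger conclusion: an explicit constant, validity for all $k\ge 0$ (hence all $n\ge 1$, whereas the cited lemma only covers large $u$), and a structural explanation of where the base $\tfrac{2q+h^{z/2}}{1+h^{z/2}}$ in the Cs\'aki--F\"oldes--R\'ev\'esz bound comes from, namely the top eigenvalue of the killed two-point chain; what it costs is about a page of computation where the paper needs one citation.
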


\noindent {\bf Proof.} It follows from \cite[Lemma 5.1]{CFR2008} that there exists a constant $C$ such that for all sufficiently large $u$,
\begin{equation*}
    \mathbf{P}(\xi(0,\infty)+\xi(z,\infty) \geq u) \leq
        C \bigg( \frac{2q+h^{z/2}}{1+h^{z/2}}\bigg)^u\leq C \bigg( \frac{2q+h^{1/2}}{1+h^{1/2}}\bigg)^u.
\end{equation*}
Hence
  \begin{align*}
 &\sup_{z\in \mathbb{Z}^+} \mathbf{P}(\xi(0,\infty)+\xi(z,\infty)\geq 2 \lambda A \log n)\\
  \leq &  C \exp\left[\log\left(\frac{2q+h^{1/2}}{1+h^{1/2}}\right)
  \cdot 2 \lambda A \log n\right]\\
  \leq & C \exp\left(-\frac{1}{2} (\delta+1) \lambda^{-1} \cdot 2 \lambda A \log n\right)
  =    C n^{-A(\delta +1)}.
    \end{align*}
\hfill\fbox

We now introduce some stopping times and related events that will used frequently
in the next section.
For any integers $m \ge 1$ and $k\ge 0$, we define the stopping times $T_m^k$ and corresponding locations $L_m^k$ by
\begin{equation*}\label{eq:2.3}
T_m^0:=0, ~~T_m^k:=\inf \left\{n>T_m^{k-1}: \#\left\{z \in \mathbb{Z}: \xi(z, n) \geq m\right\}=k\right\} \text { for } k \geq 1 \text { and } L_m^k:=S_{T_m^k}.
\end{equation*}
$T_m^k$ is the first time that the random walk has visited $k$ distinct sites,  each at least $m$ times, and $L_m^k$ is the location of the $k$-th such site.
Define $\mathcal{F}_m^k:=\sigma\left\{S_{\left[0, T_m^k\right]}\right\}$.

For any positive integers $m, k$, we define
\begin{align*}
A_m^k&:=\left\{S_n \notin\left\{L_m^1, \ldots, L_m^{k-2}\right\}, \text { for any } n \in\left[T_m^{k-1}+m / 2, T_m^k \wedge T_{m+1}^1\right]\right\} \\
&\quad\quad \cap\left\{S_n \neq L_m^{k-1}, \text { for any } n \in\left(T_m^{k-1}, T_m^k \wedge T_{m+1}^1\right]\right\},
\end{align*}
which is the event that $\left\{S_n\right\}_{n\geq0 }$ does not hit the sites $\left\{L_m^1, \ldots L_m^{k-2}\right\}$ between $T_m^{k-1}+\frac{m}{2}$ and $T_m^k \wedge T_{m+1}^1$ and also does not hit $L_m^{k-1}$ between $T_m^{k-1}+1$ and $T_m^k \wedge T_{m+1}^1$.
We also define
\begin{equation*}
\widetilde{A}_m^k:=\left\{S_n \notin\left\{L_m^1, \dots, L_m^{k-2}\right\}, \text { for any } n \in\left[T_m^{k-1},\left(T_m^{k-1}+m / 2\right) \wedge T_m^k \wedge T_{m+1}^1\right)\right\}
\end{equation*}
and
\begin{align}\label{eq:2.5}
B_m^k:=A_m^1 \cap \cdots \cap A_m^k ; \quad \widetilde{B}_m^k:=\widetilde{A}_m^2 \cap \cdots \cap \widetilde{A}_m^k.
\end{align}

Let
\begin{align}\label{eq:2.6}
C_m^k:=\left\{\exists~ n \geq 0 \text { s.t. } \# \mathcal{K}(n)=k \text { and } \xi(n)=m\right\}=\left\{T_m^k<T_{m+1}^1\right\}.
\end{align}
Since $\xi(n) \rightarrow \infty$ almost surely, for any $k \in \mathbb{Z}^{+}$, we have
\begin{equation*}
\left\{\# \mathcal{K}(n) \geq k \text { infinitely often in } n\right\} \stackrel{\text { a.s. }}{=}\left\{C_m^k \text { infinitely often in } m\right\}.
\end{equation*}
Since $C_m^k=\left\{T_m^k<T_{m+1}^1\right\}$, we know that
$C_m^k \in \mathcal{F}_{m+1}^1$. Note that,  by  the definitions above,
\begin{equation}\label{eq:2.7}
C_m^k=B_m^k \cap \widetilde{B}_m^k.
\end{equation}

\section{Proofs of Theorems \ref{thm:1.1} and  \ref{thm:1.2}}\setcounter{equation}{0}

To prepare for  the proof Theorem \ref{thm:1.1}, we first make some preparations.

\begin{lem}\label{lem3.3}
Suppose that for any $j=1,2,\ldots,n$, $\mathbf{P}\left(A_j\right)=c>0$. Then for any
$k=1,2,\ldots,n-1$,
we have
\begin{equation*}
\mathbf{P}\left(\sum_{j=1}^n 1_{A_j}>k\right) \geq c-\frac{k(k+1)}{2n} .
\end{equation*}
\end{lem}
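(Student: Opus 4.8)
The plan is to deduce everything from a single truncated first-moment estimate for the counting variable $N:=\sum_{j=1}^n 1_{A_j}$. The only properties we will use are that $0\le N\le n$ and that, by linearity of expectation (no independence of the $A_j$ is required), $\mathbf{E}[N]=\sum_{j=1}^n\mathbf{P}(A_j)=nc$. The assertion is then a statement about $\mathbf{P}(N>k)$, and it follows from these facts together with the hypothesis $k\le n-1$.

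First I would split the mean according to whether $N$ exceeds $k$: on $\{N\le k\}$ we bound $N$ by $k$, and on $\{N>k\}$ we bound $N$ by $n$, so
\[
nc=\mathbf{E}[N]=\mathbf{E}\!\left[N\,1_{\{N\le k\}}\right]+\mathbf{E}\!\left[N\,1_{\{N>k\}}\right]\le k\,\mathbf{P}(N\le k)+n\,\mathbf{P}(N>k)=k+(n-k)\,\mathbf{P}(N>k).
\]
Since $k\le n-1$ we have $n-k\ge1$, so this rearranges to $\mathbf{P}(N>k)\ge\dfrac{nc-k}{n-k}$. (One could equally write $nc=\sum_{j=1}^n\mathbf{P}(N\ge j)\le k+(n-k)\,\mathbf{P}(N\ge k+1)$, using that $j\mapsto\mathbf{P}(N\ge j)$ is non-increasing.)

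It then remains only to check the elementary inequality $\dfrac{nc-k}{n-k}\ge c-\dfrac{k(k+1)}{2n}$, from which the lemma follows. If $nc\le k$ this is immediate, since then $c-\dfrac{k(k+1)}{2n}\le c-\dfrac{k}{n}\le0\le\mathbf{P}(N>k)$ (using $k+1\ge2$). If $nc>k$, then $c\ge k/n$, hence $1-c\le(n-k)/n$, and therefore
\[
\frac{nc-k}{n-k}=c-\frac{k(1-c)}{n-k}\ \ge\ c-\frac{k}{n}\ \ge\ c-\frac{k(k+1)}{2n},
\]
again because $k+1\ge2$. I do not expect a genuine obstacle; the only thing to watch is the degenerate regime $nc\le k$, in which the claimed right-hand side is non-positive and hence the inequality is vacuous, so it is cleanest to treat that case first and thereby keep the division by $n-k$ legitimate. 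In fact the same argument yields the slightly sharper bound $\mathbf{P}(N>k)\ge\max\{0,\,(nc-k)/(n-k)\}\ge c-k/n$.
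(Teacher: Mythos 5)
Your proof is correct. Both arguments are pure first-moment computations using only $\mathbf{E}\bigl[\sum_j 1_{A_j}\bigr]=nc$ and $0\le N\le n$, but the decompositions differ: the paper expands $nc=\sum_m m\,\mathbf{P}(Q_m)$ over the level sets $Q_m=\{N=m\}$, bounds $\mathbf{P}(Q_m)\ge \frac{m}{n}\mathbf{P}(Q_m)$ on the tail $m>k$, and then crudely estimates $\sum_{m=0}^k m\,\mathbf{P}(Q_m)\le \sum_{m=0}^k m=\frac{k(k+1)}{2}$, which is exactly where the $\frac{k(k+1)}{2n}$ loss comes from; you instead truncate at level $k$ (a reverse-Markov argument), getting $\mathbf{P}(N>k)\ge \frac{nc-k}{n-k}$, and then check the elementary comparison with the stated right-hand side, including the degenerate case $nc\le k$ where the claim is vacuous. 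Your route actually yields the sharper bound $\mathbf{P}(N>k)\ge c-\frac{k}{n}$ for $k\ge 1$, which is strictly better than the paper's $c-\frac{k(k+1)}{2n}$ as soon as $k\ge 2$; since the lemma is only used downstream (via Lemma 3.2) with $n\to\infty$ and $k$ fixed, either bound suffices, but your version is cleaner and loses nothing. The case analysis and the use of $k\le n-1$ to keep $n-k\ge 1$ are handled correctly, so there is no gap.
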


\noindent {\bf Proof.} For any $m=0,1,\ldots,n$, let $Q_m:=\left\{\sum_{j=1}^n 1_{A_j}=m\right\}$. Then
\begin{equation*}
\sum_{m=0}^n m  \mathbf{P}\left(Q_m\right)=\mathbf{E}\left(\sum_{j=1}^n 1_{A_j}\right)=nc,
\end{equation*}
which implies that
\begin{align*}
\mathbf{P}\left(\sum_{j=1}^n 1_{A_j}>k\right) & =\sum_{m=k+1}^n \mathbf{P}\left(Q_m\right) \geq \sum_{m=k+1}^n \frac{m}{n} \mathbf{P}\left(Q_m\right) \\
&= c-\frac{1}{n} \sum_{m=0}^k m \mathbf{P}\left(Q_m\right)\\
& \geq c-\frac{1}{n} \sum_{m=0}^k m=c-\frac{k(k+1)}{2n} .
\end{align*}
\hfill\fbox

\begin{lem}\label{lem:3.2}
Let $\left\{A_n\right\}_{n \geq 1}$ be a sequence of events and $c \in(0,1]$. If $\mathbf{P}\left(A_n\right) \geq c$ for all $n\ge 1$,  then
$
\mathbf{P}\left(A_n~ \text{\rm i.o.} \right) \geq c .
$
\end{lem}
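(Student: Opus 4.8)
The statement is a simple Fatou-type / reverse-Fatou argument, so the plan is to bound the probability of the limsup event from below by passing to the limit. Write $\{A_n \text{ i.o.}\} = \bigcap_{N\ge 1} \bigcup_{n\ge N} A_n$. Set $G_N := \bigcup_{n\ge N} A_n$; these events are decreasing in $N$, so by continuity of measure from above $\mathbf{P}(A_n \text{ i.o.}) = \lim_{N\to\infty}\mathbf{P}(G_N)$. For each fixed $N$ we have $G_N \supseteq A_N$, hence $\mathbf{P}(G_N)\ge \mathbf{P}(A_N)\ge c$ by hypothesis. Taking the limit in $N$ gives $\mathbf{P}(A_n \text{ i.o.}) \ge c$, which is exactly the claim.

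That is genuinely all that is needed; there is no real obstacle here, since the only ingredients are monotone continuity of a probability measure and the uniform lower bound $\mathbf{P}(A_n)\ge c$. If one prefers to keep the argument completely self-contained one can avoid invoking continuity from above and instead note directly that $\{A_n \text{ i.o.}\}^c = \bigcup_N \bigcap_{n\ge N} A_n^c$, so $\mathbf{P}(\{A_n\text{ i.o.}\}^c) = \lim_N \mathbf{P}\!\left(\bigcap_{n\ge N}A_n^c\right) \le \lim_N \mathbf{P}(A_N^c) \le 1-c$, giving the same conclusion. The only point worth stating carefully is that no independence or any other structural assumption on the $A_n$ is used — the bound follows purely from $\mathbf{P}(A_N)\le \mathbf{P}(\bigcup_{n\ge N}A_n)$ and the fact that the tail unions shrink to the limsup event. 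I expect the paper's proof to be two or three lines along exactly these lines, and it will presumably be applied in the next section with $A_n$ taken to be the events $C_m^k$ (or related events indexed so that a uniform-in-$m$ lower bound on their probabilities is available), turning such a uniform bound into the ``infinitely often'' conclusion required for Theorem \ref{thm:1.1}.
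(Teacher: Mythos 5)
Your proof is correct, but it is not the route the paper takes. You prove the bound via the standard reverse-Fatou argument: the tail unions $G_N=\bigcup_{n\ge N}A_n$ decrease to $\{A_n \text{ i.o.}\}$, each contains $A_N$ and so has probability at least $c$, and continuity from above finishes it. The paper instead goes through its Lemma \ref{lem3.3}, a first-moment counting bound: for finitely many events with common probability $c$ one has $\mathbf{P}\bigl(\sum_{j=1}^n 1_{A_j}>k\bigr)\ge c-\tfrac{k(k+1)}{2n}$, obtained by comparing $\mathbf{E}\sum_j 1_{A_j}=nc$ with the contribution of the events $\{\sum_j 1_{A_j}=m\}$ for $m\le k$; letting $n\to\infty$ for fixed $k$ and then $k\to\infty$ gives $\mathbf{P}\bigl(\sum_{j=1}^\infty 1_{A_j}=\infty\bigr)\ge c$, which is the same i.o.\ event. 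Your argument is shorter, needs no auxiliary lemma, and uses only monotone continuity of the measure; the paper's argument, at the price of the extra lemma, passes through a quantitative finite-$n$ estimate on the number of occurrences (though that extra information is not exploited elsewhere in the paper). Your closing remark about the intended application is slightly off target — in the proof of Theorem \ref{thm:1.1} the lemma is applied with $m\mapsto C_m^k$ using the uniform bound $\mathbf{P}(C_m^k)\ge(1-2q)^k$ from \eqref{e:step1}, i.e.\ the index of the sequence is $m$ with $k$ fixed — but this does not affect the correctness of your proof.
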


\noindent {\bf Proof.}
By Lemma \ref{lem3.3}, we know that for any $k \geq 1$,
\begin{equation*}
\mathbf{P}\left(\sum_{j=1}^{\infty} 1_{A_j}>k\right) \geq c.
\end{equation*}
It follows that
\begin{equation*}
\mathbf{P}\left(\sum_{j=1}^{\infty} 1_{A_j}=\infty\right)=\lim _{k \rightarrow \infty} \mathbf{P}\left(\sum_{j=1}^{\infty} 1_{A_j}>k\right) \geq c.
\end{equation*}
\hfill\fbox

\bigskip

Now we are ready to prove Theorem \ref{thm:1.1}.

\noindent {\bf Proof of Theorem \ref{thm:1.1}.}
For any positive integers $k, m$ and $j=1,2,\ldots,k$, let
$$
\wid{C}_m^j := \{ S_n > S_{T_m^{j-1}}, ~\text{for any}~ n\in(T_m^{j-1}, T_m^j] \}.
$$
By the strong Markov property and Lemma \ref{lem:2.1}, we have
\begin{equation*}
\begin{aligned}
\mathbf{P}\left(\wid{C}_m^{1} \cap \wid{C}_m^{2}\right)
&= \mathbf{P}\left(S_n>0, \forall n \in\left(0, T_m^{1}\right]; S_n>S_{T_m^{1}}, \forall n \in\left(T_m^{1}, T_m^{2}\right]\right) \\
&=\mathbf{E}\left(1_{\{S_n>0, \forall n \in\left(0, T_m^{1}\right]\}}
\cdot \mathbf{P}_{L_{T_m^1}}(S_n>S_0,\forall n\in(0,T_m^2-T_m^1])\right)\\
&\geq \mathbf{E}\left(1_{\{S_n>0, \forall n>0\}}
\cdot \mathbf{P}_{L_{T_m^1}}(S_n>S_0,\forall n>0)\right)\\
& =(1-2q)^2 .
\end{aligned}
\end{equation*}
Using induction, we easily get
\begin{equation*}
\mathbf{P}\left(\wid{C}_m^1 \cap \cdots \cap \wid{C}_m^k\right) \geq (1-2q)^k.
\end{equation*}
Since $\wid{C}_m^1\cap \cdots \cap \wid{C}_m^k\subseteq C_m^k$, we have
\begin{align}\label{e:step1}
\mathbf{P}\left(C_m^{k}\right) \geq (1-2q)^k.
\end{align}

Combining \eqref{e:step1} with Lemma \ref{lem:3.2},
we immediately get that
\begin{align}\label{e:step2}
\mathbf{P}(``k \mbox{ favorite sites" occurs i.o.})\ge (1-2q)^k.
\end{align}

Using the transience of  $\left\{S_n\right\}_{n \geq 0}$, one can easily see that the  favorite sites process $(\mathcal{K}(n))_{n\geq 1}$ is transient, i.e., a fixed site cannot be a favorite site infinitely often.

Let $g(k):=\#\{n \geq 1: \# \mathcal{K}(n)=k\}$.
By the proof in \cite[Section 3.3]{DS2018} and the transience of $(\mathcal{K}(n))_{n\geq 1}$, we get hat $\{g(k)=\infty\}$ is a tail event.
Then by Kolmogorov's 0-1 law and
\eqref{e:step2}, we get that
\begin{align*}\label{eq:3.1}
\mathbf{P}(g(k)=\infty)=1.
\end{align*}
\hfill\fbox

Now we turn to the proof of Theorem \ref{thm:1.2}. We first prove two lemmas.

For $\epsilon \in(0,1)$, define
\begin{equation*}
D_n^\epsilon:=\left\{\left(1-\frac{\epsilon}{2}\right) \lambda \log n \leq \xi(n) \leq 2\left(1+\frac{\epsilon}{2}\right) \lambda \log n\right\},
\end{equation*}
where $\lambda$ is defined in \eqref{gamma-h-lambda}. Let
\begin{equation*}
E_n^\epsilon:=\left\{\mathrm{F}_n=0\right\},
\end{equation*}
where
\begin{equation*}
\mathrm{F}_n:=\sum_{i=1}^n \sum_{j=i+1}^{(i+2 \lambda \log n) \wedge n} 1_{\left\{\min \left(\xi\left(S_i, n\right), \xi\left(S_j, n\right)\right) \geq(1-\epsilon) \lambda \log n, S_i, S_j \notin S_{(i, j)}, S_i \neq S_j\right\}}.
\end{equation*}
In other words, $\mathrm{E}_n^\epsilon$ is the event that there exist no pair of thick points (\text{i.e.} sites with local time at least $(1-\epsilon) \lambda \log n$) that lie ``close together" before time $n$.

Recall that $\delta$ is defined in \eqref{theta-delta}. Fix $\rho>0$ such that $\rho \delta / 2>1+\delta$.

\begin{lem}\label{lem:4.1}
For any $\epsilon \in(0,1)$ with $(1+\delta)(1-\epsilon)^2 >1+\delta /2$ , there exists $C=C(\epsilon)>0$ such that for $N$ large,
\begin{equation*}\label{eq:3.4}
\mathbf{P}\left(\bigcup_{n \geq N}\left(D_n^\epsilon \cap E_n^\epsilon\right)^{\mathrm{c}}\right) \leq C N^{-(\epsilon\wedge( \delta/\rho))}.
\end{equation*}
Consequently,
\begin{equation*}
\mathbf{P}\left(D_n^\epsilon \cap E_n^\epsilon \text { occurs eventually }\right)=1.
\end{equation*}
\end{lem}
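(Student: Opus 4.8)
The plan is to bound the two "bad events" $(D_n^\epsilon)^{\mathrm c}$ and $(D_n^\epsilon\cap E_n^\epsilon)^{\mathrm c}\subseteq (D_n^\epsilon)^{\mathrm c}\cup (E_n^\epsilon)^{\mathrm c}$ separately, show each is summable (after restricting $n$ to a geometric subsequence for the first, or directly for the second), and then pass from $\sum_n\mathbf P((D_n^\epsilon)^{\mathrm c})<\infty$–type bounds to the "eventually" statement via Borel--Cantelli. First I would handle $(D_n^\epsilon)^{\mathrm c}$. By Lemma \ref{lem:2.4}, $\xi(n)/\log n\to\lambda$ a.s., so for each fixed $n$ the probability that $\xi(n)$ falls outside $[(1-\epsilon/2)\lambda\log n,\ 2(1+\epsilon/2)\lambda\log n]$ is small; to get a quantitative tail I would use the union bound $\mathbf P(\xi(n)\ge u)\le\sum_{z}\mathbf P(\xi(z,\infty)\ge u)$ together with the exact geometric law in Lemma \ref{lem:2.3} (summing the three regimes $z<0$, $z=0$, $z>0$; the $z<0$ sum converges because $h^{-z}$ is summed against a factor controlling the number of negative sites reached, or more simply one truncates to $|z|\le\xi(n)$ which is $O(\log n)$), yielding $\mathbf P(\xi(n)\ge 2(1+\epsilon/2)\lambda\log n)\le C n^{-(1+\epsilon)}$ or so; the lower tail $\mathbf P(\xi(n)\le(1-\epsilon/2)\lambda\log n)$ is bounded by the standard argument that the walk spends roughly $\log n$ visits at its favorite site before escaping, giving a polynomially small bound as well. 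This makes $\sum_n\mathbf P((D_n^\epsilon)^{\mathrm c})$ behave like $\sum_n n^{-(1+c)}<\infty$, hence $\mathbf P(\bigcup_{n\ge N}(D_n^\epsilon)^{\mathrm c})\le C N^{-\epsilon}$.

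Next I would bound $\mathbf P((E_n^\epsilon)^{\mathrm c})=\mathbf P(\mathrm F_n\ge 1)\le\mathbf E[\mathrm F_n]$ by the first-moment (union) bound. Writing out $\mathbf E[\mathrm F_n]=\sum_{i=1}^n\sum_{j=i+1}^{(i+2\lambda\log n)\wedge n}\mathbf P(\cdots)$, on the event inside the indicator the two sites $S_i\ne S_j$ are both "thick" (local time $\ge(1-\epsilon)\lambda\log n$) and neither lies in $S_{(i,j)}$, so that the excursions counting the local time at $S_i$ before time $i$ are (essentially) independent of those at $S_j$ after time $j$, and one can split the local-time-accumulation at the two sites into a "before $i$" part and an "after $j$" part. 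Using the strong Markov property at times $i$ and $j$ and the condition $S_i,S_j\notin S_{(i,j)}$, the joint event is dominated by a product of the form $\mathbf P_0(\xi(0,\infty)+\xi(z,\infty)\ge 2(1-\epsilon)\lambda\log n)$ with $z=S_j-S_i$ (plus symmetric reversed-time contributions), and Proposition \ref{pro:2.5} gives this $\le C n^{-(1+\delta)(1-\epsilon)^2}$. Wait — one must be a little careful: the relevant bound is $\mathbf P(\xi(0,\infty)+\xi(z,\infty)>2\lambda A\log n)\le Cn^{-A(1+\delta)}$, and here effectively $2\lambda A\log n\approx 2(1-\epsilon)\lambda\log n\cdot(\text{something})$; the cleanest route is to note each site individually being thick costs $n^{-(1-\epsilon)(1+\delta)}$ via Lemma \ref{lem:2.3}/Proposition \ref{pro:2.5}, but using the combined quantity and the hypothesis $(1+\delta)(1-\epsilon)^2>1+\delta/2$ one extracts exponent $>1+\delta/2$. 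Summing over the $O(n\cdot\log n)$ pairs $(i,j)$ gives $\mathbf E[\mathrm F_n]\le C n\log n\cdot n^{-(1+\delta/2)}=C(\log n)\,n^{-\delta/2}$, which is summable along the geometric subsequence $n_k=\lceil(1+1/k?)\rceil$ — more precisely, choosing $\rho$ with $\rho\delta/2>1+\delta$ as fixed before the lemma, one sums over $n$ running through blocks of the dyadic-type scale so that $\sum_{n\ge N}(\log n)n^{-\delta/2}$ is controlled, or one directly uses $\sum_{n\ge N}\mathbf P((E_n^\epsilon)^{\mathrm c})\le C\sum_{n\ge N}(\log n)n^{-\delta/2}$; the exponent $\delta/\rho$ in the statement comes from comparing the blow-up of $\xi(n)$ over a geometric block against the polynomial decay, so that within a block $[n_k,n_{k+1}]$ all the relevant events can be dominated by the endpoint event with a loss of a factor $n_k^{1/\rho}$.

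Combining, $\mathbf P\big(\bigcup_{n\ge N}(D_n^\epsilon\cap E_n^\epsilon)^{\mathrm c}\big)\le\mathbf P\big(\bigcup_{n\ge N}(D_n^\epsilon)^{\mathrm c}\big)+\mathbf P\big(\bigcup_{n\ge N}(E_n^\epsilon)^{\mathrm c}\big)\le CN^{-\epsilon}+CN^{-\delta/\rho}\le CN^{-(\epsilon\wedge(\delta/\rho))}$, which is the claimed bound; letting $N\to\infty$ gives $\mathbf P\big(\bigcup_{n\ge N}(D_n^\epsilon\cap E_n^\epsilon)^{\mathrm c}\big)\to 0$, equivalently $\mathbf P(D_n^\epsilon\cap E_n^\epsilon\text{ eventually})=1$. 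The main obstacle I anticipate is the second-moment/union-bound step for $\mathrm F_n$: making the decoupling of the local times at $S_i$ and $S_j$ rigorous requires carefully using the geometric constraint ($S_i\notin S_{(i,j)}$, $S_j\notin S_{(i,j)}$, $S_i\ne S_j$) so that the contributions to $\xi(S_i,n)$ from excursions straddling the interval $[i,j]$ are negligible and the strong Markov property at $i$ and at $j$ applies to genuinely conditionally independent blocks, and then identifying the resulting quantity with exactly the combination $\xi(0,\infty)+\xi(z,\infty)$ handled by Proposition \ref{pro:2.5}; the bookkeeping of which of the two sites is "new" at which time and handling the reversed-time excursions symmetrically is where the real work lies. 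The restriction of $n$ to a sufficiently sparse (geometric with ratio depending on $\rho$) subsequence and the monotonicity/continuity arguments to fill in the gaps is routine but must be stated.
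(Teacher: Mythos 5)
Your outline follows the same route as the paper (union bound with geometric tails for the upper tail of $\xi(n)$, a first--moment bound on pairs of nearby thick points via Proposition \ref{pro:2.5}, and a rescaling by $\rho$), but it has a genuine gap: the lower tail $\mathbf P\left(\xi(n)\le(1-\epsilon/2)\lambda\log n\right)$ is only asserted. Lemma \ref{lem:2.4} is an almost sure limit and yields no rate for fixed $n$, and ``the walk spends roughly $\log n$ visits at its favorite site before escaping'' is not an argument. The paper proves this estimate by an explicit block construction: with $u=[(1-\epsilon/2)\lambda\log n]$, cut the first $n$ steps into $v=[n/u^2]$ disjoint blocks; in each block the walk makes $u$ consecutive returns to the block's starting point, each within $u$ steps, with probability $\beta=(1-\gamma(u+1))^{u}=n^{-(1-\epsilon/2)+o(1)}$ by Lemma \ref{lem:2.1} and \eqref{eq:2.1}; independence of the blocks then gives the bound $(1-\beta)^v\le e^{-n^\alpha}$ for some $\alpha>0$. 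Some quantitative argument of this kind is indispensable for the bound $\mathbf P((D_n^\epsilon)^{\mathrm c})\le Cn^{-(1+\epsilon)}$ that you subsequently sum.

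Two further points need repair. For the upper tail, the truncation must be to the range of the walk, $|z|\le n$, giving the factor $2n$ against the per-site bound $Cn^{-2(1+\epsilon/2)}$ from Lemma \ref{lem:2.3}; truncating to $|z|\le\xi(n)$ is not legitimate (the range is of order $n$, not $O(\log n)$), and the divergence you worry about comes from $z>0$, where the geometric bound does not depend on $z$, not from $z<0$, where the factor $h^{-z}=h^{|z|}\le 1$ is harmless. For $E_n^\epsilon$, your ``direct'' alternative $\sum_{n\ge N}(\log n)\,n^{-\delta/2}$ diverges whenever $\delta\le 2$, so it cannot give the stated bound; the $\rho$-rescaling is essential, and its precise form matters: one dominates $\mathrm F_k$ for every $k\in[n^\rho,(n+1)^\rho)$ by the two-scale count $\mathrm F_{(n+1)^\rho,n^\rho}$ (time horizon at the top of the block, thickness threshold $(1-\epsilon)\lambda\log n^\rho$ at the bottom), whose first moment is at most $C\,n^\rho\log n\cdot n^{-\rho(1+\delta/2)}\le Cn^{-(1+\delta)}$ by the choice $\rho\delta/2>1+\delta$; summing over $n\ge N^{1/\rho}$ is exactly what produces the exponent $\delta/\rho$. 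With these repairs, your decoupling step (strong Markov property at $i$ and $j$, time reversal, the $\epsilon$-grid splitting, and Proposition \ref{pro:2.5}) coincides with the paper's proof.
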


\begin{proof}[\bf Proof.] By Lemma \ref{lem:2.3} and the
definition of $\lambda$ in \eqref{gamma-h-lambda},
we have for $z\in \mathbb{Z}$
\begin{align*}
\mathbf{P}\left(\xi(z,n) \geq 2(1+\frac{\epsilon}{2}) \lambda \log n\right)
& \leq \sum_{k \geq 2(1+\epsilon/2) \lambda \log n} \mathbf{P} (\xi(z,n)=k)\\
&\leq \sum_{k \geq 2(1+\epsilon/2) \lambda \log n } \mathbf{P}(\xi(z, \infty
)=k) \\
&\leq \sum_{k \geq 2(1+\epsilon/2) \lambda \log n } (2q)^{k-1} (1-2q)\\
&\leq (1-2q)\cdot \frac{(2q)^{2(1+\epsilon/2) \lambda \log n-1}}{1-2q}\\
&\leq C n^{-2(1+\epsilon/2)}.
\end{align*}
It follows that when $n$ is large enough so that $2(1+\frac{\epsilon}{2}) \lambda \log n>1$,
\begin{align}\label{eq:4.1}
\mathbf{P}\left(\xi(n)\geq 2(1+\frac{\epsilon}{2}) \lambda \log n\right)
&=\mathbf{P}\left(\bigcup_{k=-(n-1)}^{n-1}\left\{\xi(k,n)\geq 2(1+\frac{\epsilon}{2}) \lambda \log n\right\}\right)\notag\\
&\leq \sum_{k=-(n-1)}^{n-1}\mathbf{P}\left(\xi(k,n)\geq 2(1+\frac{\epsilon}{2}) \lambda \log n\right)\notag\\
& \leq 2n \cdot C n^{-2(1+\epsilon/2)}=C n^{-(1+\epsilon)}.
  \end{align}

Let
\begin{equation*}
u:=\left[(1-\frac{\epsilon}{2})\lambda\log n\right],\quad v:=\left[\frac{n}{u^2}\right].
\end{equation*}
Recall that $\gamma(u+1)$ is the probability that the random walk does not return to the starting point in the first $u$ steps. According to \eqref{eq:2.1}, we have
\begin{equation*}
  1 - \gamma(u+1) = 1 -\gamma- O\left(\frac{(4pq)^{(u+1)/2}}{(u+1)^{3/2}}\right)=2q+o(1),~~ u\to\infty.
\end{equation*}
For $k\ge 1$, let $T_k$ be the $k$-th time that the random walk returns to the starting point.
By the strong Markov property, we have that
$$
\mathbf{P}(T_1\leq u, T_k-T_{k-1}\leq u, k=1,2,\ldots,u)
=
\left( 1-
\gamma(u+1)
\right)^{[(1-\epsilon/2) \lambda \log n]}=:\beta.
$$
Since there are at least $v$ such independent segments in the first $n$ steps,
we have
\begin{equation}\label{eq:4.2a}
  \mathbf{P}\left(\xi(n) \leq (1-\frac{\epsilon}{2}) \lambda \log n\right)\leq (1-\beta)^v.
\end{equation}
By the definition \eqref{gamma-h-lambda} of $\lambda$, we have
\begin{equation*}
  \log\beta = \left[(1-\frac{\epsilon}{2}) \lambda \log n \right] \left(\log(2q)+o(1)\right)
=-\left(1-\frac{\epsilon}{2}\right)\log n + o(\log n),
\end{equation*}
so
$$
\beta = n^{-(1-\epsilon/2)+o(1)}.
$$
Consequently,
$$
\beta v =
n^{-(1-\epsilon/2)+o(1)}\cdot\left[\frac{n}{[(1-\frac{\epsilon}2)\lambda \log n]^2}\right].
$$
Therefore for any $\alpha\in(0,\epsilon/2)$, we have $\beta v\geq n^\alpha$ for all $n$ large, and hence
for all $n$ large,
\begin{equation*}
(1-\beta)^v \leq e^{-\beta v} \leq e^{-n^\alpha}.
\end{equation*}
Hence by \eqref{eq:4.2a}
we obtain that
for all $n$ large,
\begin{equation}\label{eq:4.2}
\mathbf{P}\left(\xi(n)\leq (1-\frac{\epsilon}{2})\lambda\log n\right)
\leq (1-\beta)^v \leq e^{-\beta v}\leq e^{-n^\alpha}.
\end{equation}
Combining \eqref{eq:4.1}, \eqref{eq:4.2} and the definition of  $D_n^\epsilon$, we get
\begin{equation}\label{eq:4.3}
\begin{aligned}
  \mathbf{P}((D_n^\epsilon)^c) & =\mathbf{P} \left( \xi(n) \leq (1-\frac{\epsilon}{2}) \lambda \log n\ \mbox{or}\ \xi(n) \geq 2(1+\frac{\epsilon}{2}) \lambda \log n  \right)  \\
  & \leq \mathbf{P} \left( \xi(n) \leq (1-\frac{\epsilon}{2}) \lambda \log n \right)+ \mathbf{P}\left( \xi(n) \geq 2(1+\frac{\epsilon}{2}) \lambda \log n  \right)  \\
   &\leq e^{-n^{\alpha}} + C n^{-(1+\epsilon)} \leq C n^{-(1+\epsilon)}.
\end{aligned}
\end{equation}

Now we deal with the probability of $(E_n^\epsilon)^c$.
For $n, \tilde{n}>0$, define
\begin{equation*}
\mathrm{F}_{n, \tilde{n}}:=\sum_{i=1}^n \sum_{j=i+1}^{(i+2 \lambda \log n) \wedge n} 1_{\left\{ \min \left(\xi\left(S_i, n\right), \xi\left(S_j, n\right)\right) \geq (1-\epsilon) \lambda \log \tilde{n}, S_i, S_j \notin S_{(i, j)}, S_i \neq S_j\right\}}.
\end{equation*}
Note that for any $k \in\left[n^\rho,(n+1)^\rho\right)$, we have the bound $\mathrm{F}_k \leq \mathrm{F}_{(n+1)^\rho, n^\rho}$.
Consequently,
\begin{equation}\label{eq:4.4}
\bigcup_{n \geq N^\rho}\left(E_n^\epsilon\right)^{c} \subset \bigcup_{n \geq N}\left\{\mathrm{F}_{(n+1)^\rho, n^\rho}>0\right\}.
\end{equation}

It is elementary to check that for non-negative random variables $X, Y$ and any $q>0$,
\begin{equation}\label{eq:4.5}
\{X+Y \geq q\} \subset \bigcup_{r=0}^{[1 / \epsilon]}\{X \geq r \epsilon q,~ Y \geq [1-(r+1) \epsilon] q\}.
\end{equation}

Let $\hat{S}_l:=S_{i-l}-S_i$ and $\widetilde{S}_l:=S_{j+l}-S_j$. For any $1 \leq i \leq (n+1)^\rho, i < j \leq $ $\left(i+\lambda \log (n+1)^\rho\right) \wedge(n+1)^\rho$, by \eqref{eq:4.5}, Proposition \ref{pro:2.5} and the assumption $(1-\epsilon)^2(1+\delta)>1+\delta/2$, we get
\begin{align*}
& \mathbf{P}\left( \min \left(\xi\left(S_i,(n+1)^\rho\right), \xi\left(S_j,(n+1)^\rho\right)\right) \geq(1-\epsilon) \lambda \log n^\rho, S_i, S_j \notin S_{(i, j)}, S_i \neq S_j\right) \\
\leq &
\sum_{z \in \mathbb{Z}\setminus \{0\}}
\mathbf{P}\left(S_j-S_i=z, \sum_{l=0}^i \mathbf{1}_{\left\{\hat{S}_l \in\{0, z\}\right\}}+\sum_{l=0}^{(n+1)^\rho-j} \mathbf{1}_{\left\{\widetilde{S}_l \in\{0,-z\}\right\}} \geq 2(1-\epsilon) \lambda \log n^\rho\right)\\
\leq &
\sum_{z \in \mathbb{Z}\setminus\{0\}}
\mathbf{P}\left(S_j-S_i=z\right) \cdot \sum_{r=0}^{[1 / \epsilon]} \mathbf{P}\left(\sum_{l=0}^i \mathbf{1}_{\left\{\hat{S}_l \in\{0, z\}\right\} } \geq 2 r \epsilon(1-\epsilon) \lambda \log n^\rho\right) \\
&~~ \cdot \mathbf{P}\left(\sum_{l=0}^{(n+1)^\rho-j} \mathbf{1}_{\left\{\widetilde{S}_l \in\{0,-z\}\right\} }\geq 2[1-(r+1) \epsilon](1-\epsilon) \lambda \log n^\rho\right) \\
\leq & \sup_{r\in(0, [1/\epsilon])} ([1/\epsilon]+1)\cdot \sup_{z\in \mathbb{Z}^+} \mathbf{P}\left( \xi(0, (n+1)^\rho)+\xi(z, (n+1)^\rho)\geq 2 r \epsilon(1-\epsilon) \lambda \log n^\rho\right)\\
&~~\cdot  \mathbf{P}\left( \xi(0, (n+1)^\rho)+\xi(z, (n+1)^\rho)\geq 2[1-(r+1) \epsilon](1-\epsilon) \lambda \log n^\rho\right)\\
\leq & \sup_{r\in(0, [1/\epsilon])} ([1 / \epsilon]+1)\cdot  \sup_{z\in \mathbb{Z}^+} \mathbf{P}(\xi(0,\infty)+\xi(z,\infty)>  2 r \epsilon(1-\epsilon) \lambda \log n^\rho)\\
&~~\cdot \mathbf{P}(\xi(0,\infty)+\xi(z,\infty)> 2[1-(r+1) \epsilon](1-\epsilon) \lambda \log n^\rho)\\
\leq & \sup_{r\in(0, [1/\epsilon])} ([1/\epsilon]+1) C \cdot n^{-r \epsilon (1-\epsilon)(\delta +1) \rho }\cdot n^{-[1-(r+1)\epsilon](1-\epsilon)(1+\delta) \rho}\\
= & ([1/\epsilon]+1) C \cdot n^{-(1-\epsilon)^2 (1+\delta) \rho} \leq C \cdot
n^{-\rho (1+ \frac{\delta}{2})}.
\end{align*}
Put $\nabla := (i+2\lambda \log (n+1)^\rho) \wedge (n+1)^\rho$.
Summing over $i$, $j$  and using the assumption that $\frac{\delta \rho }{2}>1+\delta$, we get
\begin{align}\label{eq:4.6}
&\mathbf{P}\left(\mathrm{F}_{(n+1)^\rho, n^\rho}>0\right)\notag\\
= & \mathbf{P}\left( \sum_{i=1}^{(n+1)^\rho} \sum_{j=i+1}^{\nabla} \mathbf{1}_{\left\{ \min \left(\xi\left(S_i, n\right), \xi\left(S_j, n\right)\right) \geq (1-\epsilon) \lambda \log \tilde{n}, S_i, S_j \notin S_{(i, j)}, S_i \neq S_j\right\}} >0 \right)\notag\\
\leq & \sum_{i=1}^{(n+1)^\rho} \sum_{j=i+1}^{\nabla}  \mathbf{P}\left( \min \left(\xi\left(S_i,(n+1)^\rho\right), \xi\left(S_j,(n+1)^\rho\right)\right) \geq (1-\epsilon) \lambda \log n^\rho, S_i, S_j \notin S_{(i, j)}, S_i \neq S_j\right)\notag\\
\leq & (n+1)^\rho \cdot 2 \lambda \log(n+1)^\rho \cdot  C n^{-\rho(1+\frac{\delta}{2})} \notag\\
\leq & C \log n^\rho \cdot n^{-\frac{\delta \rho}{2}}\leq   C n^{-(1+\delta)}.
\end{align}
Combining  \eqref{eq:4.3}, \eqref{eq:4.4} and \eqref{eq:4.6}, we get that, when $N$ is large
\begin{equation*}
  \begin{aligned}
  \mathbf{P} \bigg(\bigcup_{n \geq N}(D_n^\epsilon \cap E_n^\epsilon )^c\bigg)&\leq \sum_{n \geq N}  \mathbf{P} ( (D_n^\epsilon)^c )+ \sum_{n \geq N^{1/\rho}}\mathbf{P} ((E_n^\epsilon )^c) \\
  &\leq  \sum_{n \geq N}    C n^{-(1+\epsilon)} +\sum_{n \geq N^{1/\rho}} n^{-(1+\delta)}=C N^{-(\epsilon \wedge (\delta /\rho))}.
  \end{aligned}
\end{equation*}
\end{proof}

\begin{lem}\label{lem:4.2}
Almost surely, there exists $M=M(\omega)\in \mathbb{Z}^+$ such that for all $m>M$ and $k\ge 2$, $1_{C^k_m}(\omega)=1_{B^k_m}(\omega)$.
\end{lem}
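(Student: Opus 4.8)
The plan is to prove the two set-inclusions behind the claimed identity separately. One direction is automatic: by \eqref{eq:2.7}, $C_m^k=B_m^k\cap\widetilde B_m^k\subseteq B_m^k$, so $1_{C_m^k}\le 1_{B_m^k}$ pointwise, for every $m,k$. It therefore suffices to show that, almost surely, $B_m^k\subseteq\widetilde B_m^k$ for all sufficiently large $m$ and all $k\ge2$; equivalently, on the event $\{1_{B_m^k}=1\}$ one has $\omega\in\widetilde A_m^j$ for every $j\in\{2,\dots,k\}$.

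I would fix $\epsilon\in(0,1)$ with $(1+\delta)(1-\epsilon)^2>1+\delta/2$ so that Lemma~\ref{lem:4.1} applies, and work on the full-measure event $\Omega_0$ on which: (i) for every $n\ge N(\omega)$ the event $D_n^\epsilon\cap E_n^\epsilon$ holds (Lemma~\ref{lem:4.1}); (ii) $\xi(n)/\log n\to\lambda$ (Lemma~\ref{lem:2.4}); and (iii) $T_m^k<\infty$ for all $m,k$ — the last being a standard consequence of transience, since for each $m$ the walk almost surely visits infinitely many sites at least $m$ times; note that then $\xi(T_m^{j-1})=m$ whenever $T_m^{j-1}<T_{m+1}^1$. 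Fix $\omega\in\Omega_0$, take $m$ large, fix $k\ge2$, and assume $1_{B_m^k}(\omega)=1$ (otherwise there is nothing to prove). Fix $j\in\{2,\dots,k\}$. If $j=2$, the reference set $\{L_m^1,\dots,L_m^{j-2}\}$ in $\widetilde A_m^j$ is empty; if $T_m^{j-1}\ge T_{m+1}^1$, the window $[T_m^{j-1},(T_m^{j-1}+m/2)\wedge T_m^j\wedge T_{m+1}^1)$ is empty, since its right endpoint is $\le T_{m+1}^1\le T_m^{j-1}$. In either case $\widetilde A_m^j$ holds trivially, so we may assume $j\ge3$ and $T_m^{j-1}<T_{m+1}^1$.

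Now suppose, towards a contradiction, that $\omega\notin\widetilde A_m^j$. Then there are $i\in\{1,\dots,j-2\}$ and a time $n_0\in[T_m^{j-1},T_m^{j-1}+m/2)$ with $S_{n_0}=L_m^i$. Put
$$
j':=\min\{n\ge T_m^{j-1}:S_n=L_m^i\}\le n_0,\qquad i':=\max\{n<j':S_n=L_m^{j-1}\}.
$$
Since $S_{T_m^{j-1}}=L_m^{j-1}\ne L_m^i$ we have $T_m^{j-1}<j'$, so $i'$ is well defined and $T_m^{j-1}\le i'<j'$. By construction $S_{i'}=L_m^{j-1}\notin S_{(i',j')}$, $S_{j'}=L_m^i\notin S_{(i',j')}$, $S_{i'}\ne S_{j'}$, and $j'-i'\le n_0-T_m^{j-1}<m/2$; moreover $\xi(S_{i'},n)\ge\xi(L_m^{j-1},T_m^{j-1})=m$ and $\xi(S_{j'},n)\ge\xi(L_m^i,T_m^i)=m$ for every $n\ge T_m^{j-1}$. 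Take the reference time $n:=T_m^{j-1}+\lceil m/2\rceil$, so that $1\le i'<j'<n$. By (ii), $\log T_m^{j-1}=(m/\lambda)(1+o(1))$ as $m\to\infty$, uniformly over the finitely many $j$ with $T_m^{j-1}<T_{m+1}^1$, since all such $T_m^{j-1}$ lie in $[T_m^1,T_{m+1}^1)$ and $\log T_m^1$, $\log T_{m+1}^1$ are each $(m/\lambda)(1+o(1))$; hence $\log n=(m/\lambda)(1+o(1))$ too, so for all $m$ large $2\lambda\log n>m/2\ge j'-i'$, $(1-\epsilon)\lambda\log n<m\le\min(\xi(S_{i'},n),\xi(S_{j'},n))$, and $n\ge N(\omega)$. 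Thus the pair $(i',j')$ satisfies every requirement in the definition of $\mathrm F_n$, so $\mathrm F_n\ge1$ and $\omega\notin E_n^\epsilon$, contradicting (i). Therefore $\omega\in\widetilde A_m^j$ for all $j\in\{2,\dots,k\}$, i.e.\ $\omega\in\widetilde B_m^k$; combined with \eqref{eq:2.7} this gives $\omega\in C_m^k$, so $1_{C_m^k}(\omega)=1=1_{B_m^k}(\omega)$. Letting $M(\omega)$ be the threshold on $m$ obtained above completes the argument.

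The main obstacle I anticipate is the construction of the pair $(i',j')$ and the verification that it genuinely meets all the constraints hidden in $\mathrm F_n$ — in particular the ``direct-excursion'' conditions $S_{i'},S_{j'}\notin S_{(i',j')}$ and the window bound $j'-i'\le 2\lambda\log n$ — together with the uniformity, over all the active indices $j$ (whose number is of order $\log\log n$), of the comparison $\lambda\log T_m^{j-1}=m(1+o(1))$ that lets one play the fixed local-time level $m$ against the moving threshold $(1-\epsilon)\lambda\log n$. The degenerate cases ($j=2$; $T_m^{j-1}\ge T_{m+1}^1$; finiteness of the $T_m^k$) are routine but must be dispatched in order to cover every $k\ge2$.
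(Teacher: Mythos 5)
Your argument is correct, and its engine is the same as the paper's: the only way $\widetilde{A}_m^j$ can fail is that, within $m/2$ steps of the creation of the $(j-1)$-th $m$-visited site, the walk hits an earlier $m$-visited site, producing two thick points close together in time and thus violating $E_n^\epsilon$ from Lemma \ref{lem:4.1}, with the comparison between $m$ and $\log n$ supplied by the almost sure growth of the maximal local time. The organization, however, is genuinely different and a bit cleaner: the paper proves the conditional implication $C_m^k\Rightarrow\widetilde{A}_m^{k+1}$ and then recovers $B_m^k\Rightarrow C_m^k$ by induction through the recursion $C_m^k=C_m^{k-1}\cap A_m^k\cap\widetilde{A}_m^k$, whereas you show outright that $\widetilde{B}_m^k$ holds for every $k$ once $m$ is large, exploiting the fact that the window in $\widetilde{A}_m^j$ is capped by $T_{m+1}^1$ (so the case $T_m^{j-1}\ge T_{m+1}^1$ is vacuous) and the sandwich $T_m^1\le T_m^{j-1}<T_{m+1}^1$ together with Lemma \ref{lem:2.4}, used in place of the $D_n^\epsilon$-based bound \eqref{eq:4.9}; this removes the induction entirely, and the threshold you obtain is visibly uniform in $j$ and $k$. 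Your explicit choice of the pair $(i',j')$ (last exit from $L_m^{j-1}$ before the first hit of $L_m^i$ after $T_m^{j-1}$) is also a welcome refinement, since it verifies the direct-excursion condition $S_{i'},S_{j'}\notin S_{(i',j')}$ needed for the pair to be counted by $\mathrm{F}_n$, a point the paper passes over quickly. One minor remark: your assumption (iii) that $T_m^k<\infty$ for all $m,k$ is never actually used (and its one-line justification by ``transience'' is the only loose spot) --- in the nontrivial case you already have $T_m^{j-1}<T_{m+1}^1<\infty$, the finiteness of $T_{m+1}^1$ following from $\xi(n)\to\infty$, which Lemma \ref{lem:2.4} gives you.
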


\begin{proof}[\bf Proof]
 Recall  that (cf. \eqref{eq:2.5} and \eqref{eq:2.7}), for any $k\geq 2$,
\begin{align}\label{eq:4.7}
B_m^{k}=A_m^1\cap \cdots \cap A_m^{k},\ \tilde{B}_m^{k}=\tilde{A}_m^2\cap \cdots \cap \tilde{A}_m^{k},\ C_m^{k}=B_m^{k}\cap \tilde{B}_m^{k}.
\end{align}
It follows that
\begin{align}\label{eq:4.8}
C_m^{k}=C_m^{k-1}\cap A_m^{k}\cap \tilde{A}_m^{k}.
\end{align}

By Lemma \ref{lem:4.1}, there exists a null event $\cal{N}$ such that for every $\omega \in \Omega\setminus \cal{N}$, there exists an integer $N'=N'(\omega)\in \mathbb{Z}^+$ satisfying $\omega\in
D_n^\epsilon \cap E_n^\epsilon$ for all $n>N'$.
From now on we work with $\omega\in \Omega\setminus{\cal N}$. If $\xi(n, \omega)=m$, then
\begin{equation}\label{eq:4.9}
\exp \left(\frac{m}{2(1+\epsilon / 2) \lambda}\right) \leq n \leq \exp \left(\frac{m}{(1-\epsilon / 2) \lambda}\right).
\end{equation}

The rest of the proof is divided into two parts.

(i) We first show that for sufficiently large $m$ and all $k\ge 2$, if  $1_{C^k_m}(\omega)=1$, then $1_{\widetilde{A}^{k+1}_m}(\omega)=1$. We prove this by contradiction.

Fix $M^{\prime}=M^{\prime}(\epsilon)>0$ such that for all $m\geq M^{\prime}$ the inequality
\begin{equation}\label{eq:4.10}
(1-\epsilon) \lambda \log \left\{\exp \left(\frac{m+1}{(1-\epsilon / 2) \lambda}\right)+\frac{m}{2}\right\} \leq m
\end{equation}
holds. Set
\begin{align}\label{eq:4.11}
M:=M(\epsilon, \omega):=M^{\prime} \vee (4 \lambda \log N').
\end{align}
Assume that for some $m>M$ and $k \geq 2$, $1_{C_m^k}(\omega)=1$ but $1_{\widetilde{A}_m^{k+1}}(\omega)=0$.  When $1_{C_m^k}(\omega)=1$, $\left\{S_n\right\}_{n \geq 0}$ must hit $L_m^{k}$ at time $T_m^{k}$. When $1_{\widetilde{A}_m^{k+1}}(\omega)=0$, in the time interval $(T_m^k, T_m^k+\frac{m}{2}]$,  the random walk must also visit
one of the sites in $\{ L_m^1, \ldots, L_m^{k-1}\}$, say $L_m^{k'}$.  By \eqref{eq:4.9} we have
\begin{equation}\label{eq:4.12}
\exp \left(\frac{m}{4 \lambda}\right) \leq T_m^1<T_m^k <T_{m+1}^1 \leq \exp \left(\frac{m+1}{(1-\epsilon / 2) \lambda}\right).
\end{equation}
Define $n':=T_m^{k}+\frac{m}{2}$. From \eqref{eq:4.11} we have $m>M \geq 4\lambda \log N' $. Together with \eqref{eq:4.12} this implies
$$n'=T_m^{k}+\frac{m}{2}> \exp\left(\frac{m}{4\lambda}\right)+\frac{m}{2} > N' .$$
Combining \eqref{eq:4.10} with \eqref{eq:4.12} yields
$$m\geq (1-\epsilon) \lambda \log n'  ~~\text{and}~~   2 \lambda \log n' > \frac{m}{2}.$$
Therefore at time $n'$ both $L_m^{k}$ and $L_m^{k'}$ have local time at least $(1-\epsilon)\lambda \log n'$, and the difference between their hitting times is at most $2 \lambda \log n'$.  Hence the pair $\{L_m^{k},L_m^{k'}\}$ is counted by $\mathrm{F}_{n'}$, so $E_{n'}^\epsilon=\{\mathrm{F}_{n'}=0\}$ fails. This contradicts the assumption that
$D_{n'}^\epsilon\cap E_{n'}^\epsilon$
 holds. Consequently such an $m$ cannot exist, which completes the proof by contradiction.

(ii)  Next, we show that for sufficiently large $m$, if $1_{B_m^k}(\omega)=1$, then  $1_{C_m^{k}}(\omega)=1$.
Let $M$ be defined by \eqref{eq:4.11}. We claim that for all $m>M$ and $k\geq 2$, if $1_{B_m^{k}}(\omega)=1$  then $1_{C_m^{k}}(\omega)=1$. By the definition \eqref{eq:2.6}, we know that $1_{C_m^1}(\omega)=1$.  Then, by \eqref{eq:4.7}, \eqref{eq:4.8} and (i), we obtain by induction that $1_{C_m^j}(\omega)=1$ for $j=2, \ldots, k$.

The proof is now complete.
\end{proof}

\begin{proof}[\noindent{\bf Proof of Theorem \ref{thm:1.2}}]   By Lemma \ref{lem:2.4}, it suffices to show that almost surely,
  \begin{equation}\label{eq:4.13}
  \limsup_{ m \to \infty} \frac{ \mathcal{G}_m }{ \log m} = \theta,
  \end{equation}
  where $\theta$ is defined in \eqref{theta-delta} and
\begin{equation*}
  \mathcal{G}_m:=\sup \left\{k: T_m^k<T_{m+1}^1\right\}.
\end{equation*}
For any $\epsilon>0$ satisfying the assumption of Lemma~\ref{lem:4.1}, put
\begin{equation*}
  K_m := \bigcap_{n\ge \exp(m/4\lambda)} (D^\epsilon_n \cap E^\epsilon_n).
\end{equation*}
By Lemma~\ref{lem:4.1} we have $\mathbf{P}(K_m)\to 1$ as $m\to\infty$.
From now on we only consider $m$ large.

(i) Upper Bound. For any $\epsilon>0$, define $$I_m:=[(1+\epsilon) \theta \log m].$$ On $K_m$, if $\left\{\mathcal{G}_m>I_m\right\}=C_m^{I_m+1}$ occurs, then  by Lemma \ref{lem:2.4} and the definition of $E_n^\epsilon$, we have
\begin{equation}\label{eq:4.14}
T_{m+1}^1-T_m^j>T_m^{j+1}-T_m^j \geq \frac{m}{2},
\end{equation}
and thus
\begin{equation*}
S_{T_m^j} \notin S_{\left[T_m^j+1, T_m^j+m / 2\right)}\ \mbox{for any}\ j=1, \ldots, I_m.
\end{equation*}
By applying the strong Markov property successively at the times  $T_m^j, j=1, \ldots, I_m$, Lemma \ref{lem:2.1} and \eqref{eq:2.2}, we obtain
\begin{align*}
&~~~~\mathbf{P}\left(\left\{\mathcal{G}_m>I_m\right\} \cap K_m\right) \\
& \leq \mathbf{P}\bigg(\bigcap_{j=1}^{I_m} \left\{ S_{T_m^j} \notin S_{[T_m^{j}+1, T_m^j+m/2)} \right\} \bigg) = \left[\mathbf{P}\left(S_0 \notin S_{[1, m / 2)}\right)\right]^{I_m} \\
& \leq \left[\mathbf{P}\left(S_0 \notin S_{[1, \infty)}\right)+\mathbf{P}\left(S_0 \in S_{[m / 2, \infty)}\right)\right]^{I_m}
\leq \left( (1-2q)+ \frac{(4pq)^{m/4}}{1-\sqrt{4pq}}\right)^{I_m} \\
& = \exp\left(  (1+\epsilon) \left[-\frac{1}{\log  (1-2q)}\right]\log m \cdot \log\left[(1-2q)+ \frac{(4pq)^{m/4}}{1-\sqrt{4pq}}\right]  \right) \leq C m^{-(1+\epsilon)}.
\end{align*}
Then Lemma \ref{lem:4.1} yields
\begin{equation*}
\mathbf{P}\left(\mathcal{G}_m>I_m\right) \leq \mathbf{P}\left(\left\{\mathcal{G}_m \geq I_m\right\} \cap K_m\right)+\mathbf{P}\left(K_m^{\mathrm{c}}\right) \leq C m^{-(1+\epsilon)}+ C e^{-\frac{m ( \epsilon \wedge  (\delta/\rho))}{4\lambda}}.
\end{equation*}
The upper bounded now follows from the Borel-Cantelli lemma.

(ii) Lower bound.
Denote by $k'$ the index $\in \{1,\ldots,k-2\}$ such that $L_m^{k'}:=\max_{x \in \{1,\cdots, k-2\}} L_m^x$.  Let $n':=T_m^{k-1}-T_m^{k'}$.
Similar to \eqref{eq:4.14}, on $K_m$, we have $n' \ge \tfrac{m}{2}$ when $m$ is large.
By Lemma \ref{lem:2.2},  Stirling's formula and the fact $h=\frac{q}{p}$, we have
\begin{align}\label{eq:4.15}
\mathbf{P}\left(\left\{\sum_{j=1}^{n'}  X_j<0\right\} \cap K_m\right)
 &\leq \mathbf{P}\left(\{S_{n'}<0\}\cap \{n'\geq \frac{m}{2}\}\right)\leq \sum_{n\geq m/2}\mathbf{P}(S_n<0) \nonumber\\
 &=\sum_{n\geq m/2}\sum_{z=1}^n\mathbf{P}(S_n=-z)\leq \sum_{n\geq m/2}\sum_{z=1}^n\binom{n}{[\frac{n-z}{2}]}p^{[\frac{n-z}{2}]}q^{[\frac{n+z}{2}]} \nonumber\\
 &\leq \sum_{n\geq m/2} \sum_{z=1}^n\binom{n}{[\frac{n}{2}]}p^{[\frac{n-z}{2}]}q^{[\frac{n+z}{2}]}\nonumber\\
 &\leq C\sum_{n\geq m/2}\sum_{z=1}^n2^n\sqrt{\frac{2}{\pi n}}(pq)^{\frac{n}{2}}\left(\frac{q}{p}\right)^{\frac{z}{2}} \nonumber\\
 &=C\sum_{n\geq m/2}(4pq)^{\frac{n}{2}}\sqrt{\frac{2}{\pi n}}\frac{\sqrt{h}(1-\sqrt{h}^n)}{1-\sqrt{h}} \nonumber\\
 &\leq C\sqrt{\frac{2}{\pi}}\frac{\sqrt{h}}{1-\sqrt{h}}\sum_{n\geq m/2}(4pq)^{\frac{n}{2}}\frac{1}{\sqrt{n}}
 \leq C\frac{(4pq)^{m/4}}{\sqrt{m}}.
\end{align}
Define
$\alpha_1:=-\frac{1}{4}\log(4pq)$,
$\alpha_2:=\frac{\epsilon \wedge \delta/\rho}{4\lambda}$,
$\eta:=\frac{1}{2}\min\{\alpha_1,\alpha_2\}>0$.
Applying  Lemma~\ref{lem:4.1} again, we get that for all large enough $m$,
\begin{equation}\label{eq:4.16}
\begin{aligned}
 \mathbf{P}\left(\sum_{j=1}^{n'}  X_j<0 \right) \leq&  \mathbf{P}\left(\left\{\sum_{j=1}^{n'}  X_j<0\right\} \cap K_m\right)+\mathbf{P}(K_m^c)\\
 \leq &C\frac{(4pq)^{m/4}}{\sqrt{m}} +C e^{-\frac{m ( \epsilon  \wedge \delta/\rho)}{4\lambda}}\\
 =&   C e^{-\alpha_1 m} m^{-1/2} +C e^{-\alpha_2 m}\leq C e^{-\eta m}.
\end{aligned}
\end{equation}

For $A\subset \mathbb{Z}$ and $n\in \mathbb{N}$, define
\begin{equation*}
  \mathrm{H}_A(n):= \inf \{k \geq n: S_k \in A\}-n.
\end{equation*}
If $A=\{z\} \subset \mathbb{Z}$, we write $\mathrm{H}_A(n)$ as $\mathrm{H}_z(n)$.
 Recalling the definition of $A_m^k$, for $k>1$ we have
\begin{equation}\label{eq:4.17}
\begin{aligned}
    A_m^k \supset& \left\{\mathrm{H}_{L_m^{k-1}}\left(T_m^{k-1}\right)=\infty\right\} \cap  \left\{\mathrm{H}_{\left\{L_m^1, \ldots, L_m^{k-1}\right\}}\left(T_m^{k-1}+\frac{m}{2}\right)=\infty\right\}.
\end{aligned}
\end{equation}
Combining \eqref{eq:4.16}, \eqref{eq:4.17} and Lemma~\ref{lem:2.1}, and noting that $n'$ is $\mathcal F_{T_m^{k-1}}$-measurable, an application of the strong Markov property at the stopping time $T_m^{k-1}$ yields, almost surely,
\begin{align}\label{eq:4.18}
\mathbf{P}\left(A_m^k \mid \mathcal{F}_m^{k-1}\right) \geq& \mathbf{P}\left( \left\{\mathrm{H}_{L_m^{k-1}}\left(T_m^{k-1}\right)=\infty\right\} \bigcap  \left\{\mathrm{H}_{\left\{L_m^1, \ldots, L_m^{k-1}\right\}}\left(T_m^{k-1}+\frac{m}{2}\right)=\infty\right\} ~\big{|}~ \mathcal{F}_m^{k-1}\right) \nonumber\\
=&\mathbf{P}\left(\left\{\mathrm{H}_{L_m^{k-1}}\left(T_m^{k-1}\right)=\infty\right\}  \big|~ \mathcal{F}_m^{k-1}\right)\nonumber\\
&~-\mathbf{P}\left(\left\{ \mathrm{H}_{L_m^{k-1}}(T_m^{k-1})=\infty\right\}\bigcap \left\{\mathrm{H}_{\left\{L_m^1, \ldots, L_m^{k-1}\right\}}\left(T_m^{k-1}+\frac{m}{2} \right)< \infty\right\}\big|~ \mathcal{F}_m^{k-1}\right) \nonumber\\
\geq& \gamma -\mathbf{P}\left( \left\{L_m^{k'}>L_m^{k-1}\right\} \bigcap \left\{ \mathrm{H}_{L_m^{k'}}\left(T_m^{k-1}+\frac{m}{2}\right)< \infty\right\} \big|~ \mathcal{F}_m^{k-1} \right)\nonumber\\
=& \gamma -\mathbf{P} \left( \left\{ \sum_{j=T_m^{k'}+1}^{T_m^{k-1}} X_j <0   \right\} \bigcap \left\{ \mathrm{H}_{L_m^{k'}} \left(T_m^{k-1}+\frac{m}{2}\right) < \infty\right\} \bigg|~ \mathcal{F}_m^{k-1} \right) \nonumber\\
=& \gamma -\mathbf{P}\left( \left\{ \sum_{j=T_m^{k'}+1}^{T_m^{k'}+n'} X_j <0   \right\} \bigcap \left\{ \mathrm{H}_{L_m^{k'}}\left(T_m^{k-1}+\frac{m}{2}\right)< \infty\right\} \bigg|~ \mathcal{F}_m^{k-1} \right) \nonumber\\
\geq& \gamma -\mathbf{P}\left(  \sum_{j=T_m^{k-1}+1}^{T_m^{k-1}+n'} X_j <0 ~  \bigg|~ \mathcal{F}_m^{k-1} \right) \nonumber\\
=& \gamma -\sum_{n\geq 1}\mathbf{P}\left(  \sum_{j=T_m^{k-1}+1}^{T_m^{k-1}+n} X_j <0,~n'=n  ~ \bigg|~ \mathcal{F}_m^{k-1}\right) \nonumber\\
=& \gamma -\sum_{n\geq 1}\mathbf{1}_{\{n'=n\}}\cdot \mathbf{P}_{S_{T_m^{k-1}}}\left(  \sum_{j=1}^{n} X_j <0 \right) \nonumber\\
=& \gamma - \mathbf{P}_{S_{T_m^{k-1}}}\left( \sum_{j=1}^{n'} X_j <0    \right) \nonumber\\
\geq& 1-2q- Ce^{-\eta  m}.
\end{align}

Note that $A_m^1$ is the sure event by definition.
Note all that for any $m, k \geq 1$, both $A_m^k, B_m^k$ are in $\mathcal{F}_m^k$.  We define
\begin{equation}\label{eq:4.19}
  J_m:=[(1-\epsilon) \theta \log m].
\end{equation}
By \eqref{eq:4.18} and the definition of $\theta$ in \eqref{theta-delta}, we have
\begin{align*}
&\mathbf{P}\left(B_m^{J_m} \mid \mathcal{F}_m^1\right)= \mathbf{P}(A_m^1 \cap \cdots \cap A_m^{J_m}\mid \mathcal{F}_m^1) \\
\geq&\prod_{j=2}^{J_m} \left(1-2q - Ce^{-\eta m}\right)\\
= &\exp\left[  \sum_{j=2}^{J_m}\log\left\{ \left(1-2q\right)\left(1-\frac{Ce^{-\eta m}}{1-2q}\right)\right\} \right]\\
  = & \exp[(J_m-1) \log (1-2q)]\cdot \exp\left[(J_m-1)\log \left(1-\frac{Ce^{-\eta m}}{1-2q}\right) \right]\\
\geq & C\exp[(1-\epsilon)\theta \log m \cdot\log (1-2q)]\cdot \exp\left[(1-\epsilon)\theta \log m \cdot \log \left(1-\frac{C}{1-2q} e^{-\eta m}\right) \right]\\
= & C m^{-(1-\epsilon)}\cdot m^{-(1-\epsilon) \cdot \frac{\log\left(1-\frac{C}{1-2q} e^{- \eta m}\right)}{\log(1-2q)}}\geq Cm^{-(1-\epsilon)}.
\end{align*}
Hence
$$
\sum_{m=1}^{\infty}\mathbf{P}\left(B_m^{J_m}\mid\mathcal{F}_m^1\right)\geq \infty.
$$
Note that $B_m^k \in \mathcal{F}_{m+1}^1$.
Therefore, by the generalized second Borel-Cantelli lemma
(see e.g. \cite{D2019}, Theorem~4.3.4),
$B_m^{J_m}$ occurs infinitely often with probability one. By Lemma \ref{lem:4.2}, it follows that $C_m^{J_m}$ also occurs infinitely often almost surely. In other words,
$$
\mathbf{P}(\mathcal{G}_m \leq J_m ~\text{i.o.})=1.
$$
This completes the proof of the lower bound.
\end{proof}

{ \noindent {\bf\large Acknowledgments}\ \ We would like to thank Chenxu Hao and Yushu Zheng for helpful discussions. This work was supported by the National Natural Science Foundation of China (Grant Nos. 12171335, 12471139) and the Simons Foundation (\#429343).

%

\end{document}